\DeclareMathOperator{\supp}{supp}
\newcommand{\size}{\mathbf{size}}
\newcommand{\parent}{\mathop{\mathrm{par}}}
\newcommand{\DI}{\mathcal{I}}
\newcommand{\Leaves}{\mathcal{L}}
\newcommand{\Z}{\mathbb{Z}}
\newcommand{\R}{\mathbb{R}}
\newcommand{\W}{\mathbb{W}}
\newcommand{\tr}{\mathrm{tr}}
\newcommand{\dif}{\mathrm{d}}
\newcommand{\h}{\mathrm{h}}
\newcommand{\p}{\mathrm{p}}
\def\<{\left\langle}
\def\>{\right\rangle}
\newcommand{\eset}{B}
\newcommand{\wplus}{\oplus}
\newcommand{\wtimes}{\circledast}
\numberwithin{equation}{section}
\theoremstyle{plain}
\newtheorem{theorem}[equation]{Theorem}
\newtheorem{proposition}[equation]{Proposition}
\theoremstyle{definition}
\newtheorem{definition}[equation]{Definition}
\title[Dyadic triangular Hilbert transform]{Dyadic triangular Hilbert transform of two general and one not too general function}
\author{Vjekoslav Kova\v{c}}
\address[VK]{Department of Mathematics, Faculty of Science, University of Zagreb, Bijeni\v{c}ka cesta 30, 10000 Zagreb, Croatia}
\email{vjekovac@math.hr}
\author{Christoph Thiele}
\author{Pavel Zorin-Kranich}
\address[CT, PZ]{Mathematical Institute, University of Bonn, Endenicher Allee 60, 53115 Bonn, Germany}
\email{thiele@math.uni-bonn.de}
\email{pzorin@math.uni-bonn.de}
\thanks{V.\ K.\ was partially supported by the Croatian Science Foundation under the project 3526.
C.\ T.\ was partially supported by the NSF grant DMS 1001535.}
\date{\today}
\begin{document}
\maketitle
\allowdisplaybreaks[3]

\begin{abstract}
The so-called triangular Hilbert transform is an elegant trilinear singular integral form which specializes to many well studied objects of harmonic analysis.
We investigate $L^p$ bounds for a dyadic model of this form in the particular case when one of the functions on which it acts is essentially one-dimensional.
This special case still implies dyadic analogues of boundedness of the Carleson maximal operator and of the uniform estimates for the one-dimensional bilinear Hilbert transform.
\end{abstract}

\section{Introduction}
\subsection{Motivation}
In this article we begin the study of a dyadic model of the ``triangular'' Hilbert transform.
In order to motivate its definition consider the family of trilinear forms (dual to two-dimensional bilinear Hilbert transforms):
\begin{equation}
\label{eq:THT-real-def1}
\Lambda_{\vec\beta_{0},\vec\beta_{1},\vec\beta_{2}}(F_{0},F_{1},F_{2})
:=
\iint_{\R^{2}} \mathrm{p.v.}\int_{\R} \prod_{i=0}^{2}F_i(\vec x-\vec\beta_{i}t) \frac{\dif t}{t} \dif \vec x,
\end{equation}
where $\vec\beta_{i}\in\R^{2}$ are distinct points.
When all three points $\vec\beta_{i}$ lie on the same line, these forms reduce to integrals of one-dimensional bilinear Hilbert transforms, and by the results of \cites{MR1491450,MR1689336} we have the $L^{p}$ bounds
\begin{equation}
\label{eq:triangular-hilbert-estimate}
|\Lambda_{\vec\beta_{0},\vec\beta_{1},\vec\beta_{2}}(F_{0},F_{1},F_{2})|
\lesssim_{\vec\beta_{0},\vec\beta_{1},\vec\beta_{2}} \prod_{i=0}^{2} \| F_{i} \|_{p_{i}}
\end{equation}
for all $1<p_{i}<\infty$ with
\begin{equation}
\label{eq:Lp-range}
\sum_{i=0}^{2} \alpha_{i} = 1,
\quad
\alpha_{i} = \frac{1}{p_{i}}.
\end{equation}
By scaling, the condition \eqref{eq:Lp-range} is necessary.

The case when all $\vec\beta_{i}$'s lie on the same line can be considered as a limiting case of the more general situation when the $\beta_{i}$'s are in general position.
Bounds of the form \eqref{eq:triangular-hilbert-estimate} in this general case would unify some of the central results in time-frequency analysis:
\begin{enumerate}
\item From the general case one can recover $L^{p}$ bounds for the Carleson maximal operator by making an appropriate choice of the functions $F_{i}$; see Appendix~\ref{appendix:real}.
\item From the general case one can also recover the uniform bounds for the one-dimensional bilinear Hilbert transform in \cite{MR2113017}. 
In fact, if the estimate \eqref{eq:triangular-hilbert-estimate} is true for any triple of $\vec\beta_{i}$'s in general position, then it is true for every such triple and the implied constant does not depend on the triple.
This calculation is carried out in Appendix~\ref{appendix:real}, where it is verified that the constant will be the same as in the corresponding estimate for the trilinear form
\begin{equation}
\label{eq:THT-real-def2}
\Lambda_{\Delta}(F_{0},F_{1},F_{2})
:=
\mathrm{p.v.} \iiint_{\R^{3}} F_{0}(x,y) F_{1}(y,z) F_{2}(z,x) \frac{\dif(x,y,z)}{x+y+z},
\end{equation}
which can be called the \emph{triangular Hilbert transform}.
\item By the method of rotations a hypothetical estimate for \eqref{eq:THT-real-def1} also implies $L^{p}$ estimates for the ``less singular'' bilinear singular integrals from \cite{MR2597511} uniformly over all choices of the ``direction matrices'', at least for odd two-dimensional kernels; see Appendix~\ref{appendix:real}.
\end{enumerate}

Unfortunately, the desired estimates for the triangular singular form \eqref{eq:THT-real-def2} still seem to be out of reach of the current techniques and, from what we have said, they are expected to be highly nontrivial.
In this paper we work in a dyadic model instead of the classical one and consider a particular case when one of the functions $F_0,F_1,F_2$ takes a special form.
This case still turns out to be general enough to imply (dyadic versions of) both the $L^{p}$ bounds for the Carleson operator and uniform bounds for the bilinear Hilbert transform.
In this sense our result has stronger one-dimensional consequences than the dyadic version of the argument from \cite{MR2597511} which appears in \cite{2012arXiv1210.0886D}: the latter does not contain the uniform bounds for the bilinear Hilbert transform.

\subsection{Notation}
Let us now introduce the dyadic model for the triangular Hilbert transform.
In this model the real line is replaced by the (Walsh) field $\W=\mathbb{F}_{2}((1/t))$ of one-sidedly infinite power series with coefficients in the two-element field $\mathbb{F}_{2}$.
The field $\W$ is traditionally identified with $[0,\infty)$ via the map $\sum_{k}a_{k}t^{k}\mapsto \sum_{k}a_{k}2^{-k}$, where $\mathbb{F}_{2}$ is identified with $\{0,1\}$.
This map is one-to-one on a conull set, and we normalize the Haar measure on $\mathbb{W}$ in such a way that this map becomes measure-preserving.
Under this identification the addition $\wplus$ and the multiplication $\wtimes$ on $\mathbb{W}$ correspond to addition and multiplication of binary numbers without carrying over digits.
We refer for instance to \cite[\S 1]{MR2692998} for more details.

The sets
\[
A_{k} := [0,2^{k}),
\quad k\in\Z
\]
then become additive subgroups and their cosets are simply \emph{dyadic intervals} of length $2^k$, the collection of which will be denoted by $\mathbf{I}_{k}$.
Some dyadic intervals (typically denoted by Latin letters, such as $I$) will be interpreted as time intervals and they will always be subsets of the unit interval $[0,1)$.
Other dyadic intervals will be interpreted as frequency intervals (typically denoted by Greek letters, such as $\omega$) and they will have integer endpoints.
For a dyadic interval $I$ we write $I^1$ for its left half and $I^{-1}$ for its right half.
The unique dyadic parent of $I$ will be denoted $\parent I$.
When we mention a \emph{dyadic square} we will always mean a dyadic square contained in $[0,1)^2$.

We work with real-valued functions, which is no restriction since all systemic functions under consideration, most notably the Haar functions, are real valued.
Let us then reserve the letter $i$ to denote an index $i\in \{0,1,2\}$.
It is convenient to regard $i$ as an element of $\Z/3\Z$ and interpret $i+1$ and $i-1$ correspondingly.
We shall also consider the set $\DI_{k}$ of all triples $\vec I = (I_0,I_1,I_2)$ of dyadic intervals contained in $[0,1)$ such that
\[
|I_0|=|I_1|=|I_2|=2^{k}, \quad 0\in I_0\wplus I_1 \wplus I_2
\]
and set $\DI = \cup_{k\leq 0} \DI_{k}$.
We write
\[
(I_0,I_1,I_2)\subset (J_0,J_1,J_2)
\]
if $I_{i}\subset J_{i}$ for $i=0,1,2$.

Any function $F$ on the unit square shall be interpreted as the integral operator
\[
(F\varphi)(x):=\int_0^1 F(x,y)\varphi(y)\, \dif y
\]
on $L^2([0,1))$, denoted by the same letter.
For any dyadic interval $I$ we normalize the \emph{Haar function} $\h_{I}$ in $L^\infty$, so that $\h_{I} = \sum_{j\in\{\pm 1\}} j 1_{I^{j}}$.
We shall also write $\h_I$ for the spatial multiplier operator acting on $L^2([0,1))$ and defined by
\[
(\h_I \varphi)(x):=\h_I(x)\varphi(x) .
\]

The \emph{dyadic triangular Hilbert transform} can be written as
\begin{equation}
\label{eq:THT-def}
\Lambda^{\epsilon}(F_0,F_1,F_2)
:=
\sum_{\vec I \in \DI} \epsilon_{\vec I}
|I_i|^{-1}\tr(\h_{I_i} F_{i-1}  \h_{I_{i+1}} F_i \h_{I_{i-1}} F_{i+1}),
\end{equation}
where $(\epsilon_{\vec I})_{\vec I\in\DI}$ is an arbitrary sequence of scalars bounded in magnitude by $1$ and $i\in\{0,1,2\}$ is a fixed index.
The expression does not depend on the specific choice of $i$ by cyclicity of the trace.
If the reader prefers an explicit integral representation, then \eqref{eq:THT-def} can easily be rewritten as
\begin{align}
& \Lambda^{\epsilon}(F_0,F_1,F_2) \label{eq:THT-def2} \\
& = \sum_{\vec I \in \DI} \epsilon_{\vec I}\, |I_0|^{-1}
\iiint_{\W^{3}} \h_{I_1}(x)F_0(x,y)\h_{I_2}(y)F_1(y,z)\h_{I_0}(z)F_2(z,x) \dif x \dif y \dif z, \nonumber
\end{align}
but we will continue to use the convenient ``trace-operator'' notation.
We note that \eqref{eq:THT-def2} is a perfect Calder\'on--Zygmund kernel analogue of \eqref{eq:THT-real-def2}; i.e.
\[
\sum_{\vec I \in \DI} \epsilon_{\vec I} |I_0|^{-1} \h_{I_1}(x)\h_{I_2}(y)\h_{I_0}(z)
\]
replaces $1/(x+y+z)$.
It is necessary to insert the coefficients $\epsilon_{\vec I}$, as otherwise the above kernel would telescope to the Dirac mass $\delta_0$ evaluated at $x+y+z$, and the form would become trivial.
Informally speaking, the Walsh model cannot distinguish between $\mathrm{p.v.}\frac{1}{t}$ and $\delta_0(t)$, so it becomes faithful only after breaking the form into scales.
We obtain the following strong type estimates (see Section~\ref{sec:wave-packets} for the definition of the character $e$).
\begin{theorem}
\label{thm:main}
Let $F_{0},F_{1},F_{2}$ be functions supported on $A_{0}^{2}$.
Suppose that either
\begin{equation}
\label{eq:F0-diagonal}
F_{0}(x_{1},x_{2})=f\big(x_{2}\wplus (a\wtimes x_{1})\big)
\text{ for all }x_{1},x_{2}\in A_{0}
\end{equation}
holds with some $a\in \W\setminus A_{0}$ and some measurable $f:\W\to\R$ or
\begin{equation}
\label{eq:F0-fiberwise-character}
F_{0}(x_{1},x_{2})=f(x_{2}) e(N_{x_{2}} \wtimes x_{1})
\text{ for all }x_{1},x_{2}\in A_{0}
\end{equation}
holds with some measurable $N : \W\to\W$ and $f:\W\to\R$.
Then
\begin{equation}
\label{eq:lp-estimate}
|\Lambda^{\epsilon}(F_0,F_1,F_2)|
\lesssim
\|F_{0}\|_{p_{0}} \|F_{1}\|_{p_{1}} \|F_{2}\|_{p_{2}}
\end{equation}
for any $1<p_{2}<\infty$ and $2<p_{0},p_{1}<\infty$ with \eqref{eq:Lp-range}.
The implied constant does not depend on $a$, $N$, or the scalars $|\epsilon_{\vec I}|\leq 1$ with $\epsilon_{\vec I}=0$ whenever some $I_{i} \not\subseteq A_{0}$.
In case \eqref{eq:F0-fiberwise-character} we can relax the restriction on $p_{0}$ to $1<p_{0}<\infty$.
In case \eqref{eq:F0-diagonal}, $a\in A_{1}\setminus A_{0}$, we can relax the restrictions on both $p_{0}$ and $p_{1}$ to $1<p_{0},p_{1}<\infty$.
\end{theorem}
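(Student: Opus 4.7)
The overall plan is to perform time-frequency analysis on the Walsh plane: decompose \eqref{eq:THT-def} into elementary tritile contributions through Walsh wave packet expansions of the three $F_{i}$, and then run a size/density tree-selection argument in the spirit of the Walsh-model proofs of the Carleson maximal operator and the bilinear Hilbert transform. The structural hypothesis on $F_{0}$ supplies the linearizing selector that usually drives such arguments: in \eqref{eq:F0-fiberwise-character} it is the function $N$ that plays the role of the fiberwise Carleson frequency, and in \eqref{eq:F0-diagonal} it is the constant $a$ that determines a single tilted direction in the time-frequency plane, analogous to the modulation parameter of the uniform bilinear Hilbert transform.

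Concretely, I would expand $F_{1}$ and $F_{2}$ in the two-dimensional Walsh/Haar basis indexed by dyadic squares $Q\subset A_{0}^{2}$, and expand $F_{0}$ according to its structure. Under \eqref{eq:F0-diagonal}, expanding $f$ into Walsh wave packets of frequency $\xi$ and using the multiplicativity \(e(\xi\wtimes(x_{2}\wplus(a\wtimes x_{1})))=e(\xi\wtimes x_{2})\,e((a\wtimes\xi)\wtimes x_{1})\) separates variables and produces a sheared frequency $a\wtimes\xi$ in $x_{1}$ pinned to the frequency $\xi$ in $x_{2}$; under \eqref{eq:F0-fiberwise-character} the frequency in $x_{1}$ is directly the fiberwise selector $N_{x_{2}}$. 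Substituting into \eqref{eq:THT-def2} and using orthogonality of $\h_{I_{0}},\h_{I_{1}},\h_{I_{2}}$, the trilinear form collapses to a weighted sum over tritiles, where each tritile consists of three Walsh tiles sharing a common time interval and whose frequency intervals $\wplus$-sum to $0$.

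I would then run the standard size/energy tree selection. For $i=1,2$ the size $S_{i}(F_{i};T)$ on a tree $T$ is a normalized $\ell^{2}$ aggregate of the wave packet coefficients of $F_{i}$, and the density $D(F_{0};T)$ encodes the measure of fibers $x_{2}$ for which the structural data of $F_{0}$ (the value $f(x_{2})$ together with either the modulation $a$ or the selector $N_{x_{2}}$) fall into the frequency window of $T$. A single-tree estimate
\[
\Big|\sum_{\vec P\in T}\epsilon_{\vec P}\,\langle F_{0},\varphi^{(0)}_{\vec P}\rangle\,\langle F_{1},\varphi^{(1)}_{\vec P}\rangle\,\langle F_{2},\varphi^{(2)}_{\vec P}\rangle\Big|
\lesssim |I_{T}|\,S_{1}(F_{1};T)\,S_{2}(F_{2};T)\,D(F_{0};T)
\]
should follow from Cauchy--Schwarz in the two tensor factors together with a two-dimensional martingale-difference identity at the top of $T$. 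Summing over the selected trees via Bessel-type counting inequalities, and interpolating against the trivial bounds $S_{i}(F_{i};T)\lesssim\|F_{i}\|_{\infty}$ and $D(F_{0};T)\lesssim\|F_{0}\|_{\infty}$, should yield the full range \eqref{eq:Lp-range}.

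The main obstacle I expect is ensuring that the tree-counting and size/density estimates are uniform in $a$ and $N$, which is where the strength of the theorem lies. For a generic tilt $a\in\W\setminus A_{0}$ the sheared frequency $a\wtimes\xi$ interacts with the dyadic frequency grid in a scale-dependent way, and the available $\ell^{2}$ Bessel-type control on the resulting bilinear sums is what should force $p_{0},p_{1}>2$. When $a\in A_{1}\setminus A_{0}$ the shear is of bounded size, leaving just one exceptional scale that can be absorbed by an $\ell^{1}$ counting argument and hence permitting $1<p_{0},p_{1}<\infty$; in case \eqref{eq:F0-fiberwise-character} the problem reduces essentially to a Carleson linearization in the $x_{1}$ variable, which is what should permit $1<p_{0}<\infty$ by importing the Walsh Carleson argument.
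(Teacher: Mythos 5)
Your overall strategy---wave packet decomposition, organization into trees via a size selection, a single-tree estimate by Cauchy--Schwarz, and summation via Bessel-type counting---is indeed the skeleton of the argument, and the paper's tile structure (a ``$1\frac12$-dimensional'' decomposition with tiles $I_{0}\times I_{2}\times\omega_{1}$ and fiberwise wave-packet projections $\Pi^{(0)},\Pi^{(2)}$) is close in spirit to what you sketch. The single-tree estimate is indeed reduced, after a change of variables, to a known tree estimate from the bilinear setting, and the local $L^{2}$ restricted-type bound follows by a size-iteration tree selection. So far so good.

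However, there are two genuine gaps. First, your claim that ``summing over the selected trees \ldots and interpolating against the trivial bounds \ldots should yield the full range'' is not correct: the size/tree-selection machinery alone only produces restricted-type estimates in the local $L^{2}$ triangle $\alpha_{0},\alpha_{1},\alpha_{2}\le 1/2$. The theorem asserts that $p_{2}$ can be taken arbitrarily close to $1$ (i.e.\ $\alpha_{2}$ close to $1$) in all cases, and the mechanism for this in the paper is a \emph{fiberwise multi-frequency Calder\'on--Zygmund decomposition} applied to $F_{2}$ (Section~\ref{sec:mfcz}): one replaces $\tilde F_{2}$ on the exceptional set by a ``good'' function $G$ built from the frequencies of the selected trees, verifies by a support argument that $\Lambda_{P}$ is unchanged, controls $\|G\|_{2}$ via Hausdorff--Young and the $L^{p}$ bound on the counting function, and then runs a second tree selection on $G$. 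This step has no analogue in your proposal. Second, the ``density for $F_{0}$'' you propose would, if it worked at all, push $p_{0}$ towards $1$ while keeping $p_{1},p_{2}>2$; but the theorem's range in the generic case \eqref{eq:F0-diagonal} is the opposite ($p_{2}\to 1$ allowed, $p_{0},p_{1}>2$ required). The paper treats $F_{0}$ on the same footing as $F_{1},F_{2}$ via a symmetric notion of size built from \emph{adapted time-frequency projections}, and the asymmetry in the final range comes not from a density argument but from whether the MFCZ decomposition can be applied to the other variables: for $F_{0}$ it requires either the fiberwise-character structure (making the argument degenerate after a choice of $E_{1}'$) or the small-shear condition $a\in A_{1}\setminus A_{0}$ (making the directional maximal function coincide with the two-dimensional one, which is needed for the projections $\Pi^{(0)}$ to be compatible with the good function). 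Your heuristic explanations of the exponent restrictions in terms of ``sheared frequency interacting with the dyadic grid'' and ``$\ell^{2}$ Bessel control'' do not reflect the actual mechanism and would not reproduce the correct asymmetric range.
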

The range of exponents $\alpha_{i}$ covered by Theorem~\ref{thm:main-restricted-type} is the union of the triangles $c,b_{0},b_{1},b_{2}$ in Figure~\ref{fig:exponents}.
Since the conditions \eqref{eq:F0-diagonal} and \eqref{eq:F0-fiberwise-character} (with $a$ and $N$ fixed) describe subspaces of $L^{p_{0}}(\W^{2})$ that are themselves $L^{p_{0}}$ spaces, Theorem~\ref{thm:main} follows by real interpolation from (generalized) restricted weak type estimates.
Note that the range in which such estimates hold is clearly convex in Figure~\ref{fig:exponents}

The local $L^{2}$ case (triangle $c$) is covered by Proposition~\ref{prop:restricted-type}.
In this case the localization $I_{i} \subseteq A_{0}$ can be removed by the Loomis--Whitney inequality.
Triangle $d_{12}$ is covered by Theorem~\ref{thm:main-restricted-type}; this gives the lower half of the solid hexagon in Figure~\ref{fig:exponents}.
Triangle $d_{10}$ in cases \eqref{eq:F0-fiberwise-character} and \eqref{eq:F0-diagonal}, $a\in A_{1}\setminus A_{0}$, is covered by Theorem~\ref{thm:main-restricted-type:F0}; together with the previous result this gives the full solid hexagon in Figure~\ref{fig:exponents}.
Finally, the case \eqref{eq:F0-diagonal}, $a\in A_{1}\setminus A_{0}$, is symmetric in indices $0,2$; in this case we obtain estimates in the dashed extension of the solid hexagon in Figure~\ref{fig:exponents}.

The cases \eqref{eq:F0-diagonal} and \eqref{eq:F0-fiberwise-character} are treated in a unified way and cover all types of functions used to recover algebraically defined dyadic models for the Carleson operator and uniform estimates for the bilinear Hilbert transform, see Appendix~\ref{appendix:dyadic}.
However, note that already one type, namely the case \eqref{eq:F0-diagonal}, $a\in A_{1}\setminus A_{0}$, suffices to recover the bounds for both these operators.
In particular we recover the full range of exponents for which uniform estimates for the (dyadic) bilinear Hilbert transform are known (triangles $a_{1},a_{2}$ have been treated in \cite{MR2320411} and triangles $d_{12},d_{21}$ in \cite{MR2997005}).
This range seems to be the best possible, because for $\alpha_{\pm 1}\leq -1/2$ there are indications that even non-uniform bounds for the bilinear Hilbert transform fail, whereas for $\alpha_{0}\leq 0$ the bounds fail in the limiting case of the $1$-linear Hilbert transform.

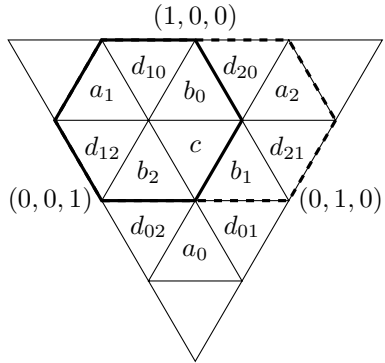
\begin{figure}
\begin{center}
\begin{tikzpicture}[x=70pt,y=70pt,cm={sin(30),-cos(30), -sin(30) ,-cos(30) ,(0,0)}]
\draw
(-1,1) -- (1,1) -- (1,-1) -- cycle
(1,-0.5) -- (0.5,-0.5) -- (0.5,1) -- (1,0.5) -- (-0.5,0.5) -- (-0.5,1) -- cycle
(1,0) node[right] {$(0,1,0)$} -- (0,0) node[above] {$(1,0,0)$} -- (0,1) node[left] {$(0,0,1)$} -- cycle
(0.33,0.33) node{$c$}
(0.16,0.16) node{$b_{0}$}
(0.16,0.66) node{$b_{2}$}
(0.66,0.16) node{$b_{1}$}
(0.33,0.83) node{$d_{02}$}
(-0.16,0.83) node{$d_{12}$}
(0.83,0.33) node{$d_{01}$}
(0.83,-0.16) node{$d_{21}$}
(0.33,-0.16) node{$d_{20}$}
(-0.16,0.33) node{$d_{10}$}
(0.66,0.66) node{$a_{0}$}
(0.66,-0.33) node{$a_{2}$}
(-0.33,0.66) node{$a_{1}$}
;
\draw[very thick] (0.5,0) -- (0.5,0.5) -- (0,1) -- (-0.5,1) -- (-0.5,0.5) -- (0,0) -- cycle;
\draw[very thick,dashed] (1,-0.5) -- (1,0) -- (0,1) -- (-0.5,1) -- (-0.5,0.5) -- (0.5,-0.5) -- cycle;
\end{tikzpicture}
\end{center}
\caption{Ranges of exponents in coordinates $(\alpha_{0},\alpha_{1},\alpha_{2})=(1/p_{0},1/p_{1},1/p_{2})$.}
\label{fig:exponents}
\end{figure}

\section{Tile decomposition}
In this section we describe a time-frequency decomposition for the form \eqref{eq:THT-def} that is well adapted both to diagonal functions \eqref{eq:F0-diagonal} and to fiberwise characters \eqref{eq:F0-fiberwise-character}.
While the decomposition of the \emph{form} is the same in both cases, the time-frequency projections of (one of) the \emph{functions} differ.
However, in both cases the time-frequency projections satisfy the same localization and scale compatibility properties, summarized in Definition~\ref{tile-proj}.
The proof of the local $L^{2}$ bounds uses only these properties and a single tree estimate.
We will have to come back to the definition of time-frequency projections in the multi-frequency Calder\'on--Zygmund decomposition in Section~\ref{sec:mfcz}.

\subsection{Wave packets}
\label{sec:wave-packets}
The characters on the Walsh field $\W$ are the \emph{Walsh functions}
\[
w_{N}(x) := e(N \wtimes x),
\]
where $N\in\W$ and $e\colon\W\to\R$ is simply the periodization of $\h_{[0,1)}$.
Their particular cases are the \emph{Rademacher functions} $r_k:=w_{2^{-k}}$, $k\in\Z$.
The \emph{Walsh wave packet} associated with a dyadic rectangle $I\times\omega$ of area $1$ is
\[
w_{I\times\omega}(x) := |I|^{-1/2} 1_{I}(x) e(l(\omega) \wtimes x),
\]
where $l(\omega)$ is the left endpoint of $\omega$.
This definition satisfies the usual recursive relations
\[
w_{P_{\mathrm{up}}} = (w_{P_{\mathrm{left}}} - w_{P_{\mathrm{right}}})/\sqrt{2},
\quad
w_{P_{\mathrm{down}}} = (w_{P_{\mathrm{left}}} + w_{P_{\mathrm{right}}})/\sqrt{2}
\]
on every dyadic rectangle $P$ of area $2$ and therefore coincides with the usual definition; see \cite[\S 1]{MR2692998}.

\subsection{Tile decomposition}
Our time-frequency analysis is $1\frac12$-dimensional in the sense of \cite{MR2597511}.
We define \emph{tiles} as dyadic boxes
\[
\p = I_{\p,0}\times I_{\p,2} \times \omega_{\p,1},
\quad\text{where}\quad
|I_{\p,0}| = |I_{\p,2}| = |\omega_{\p,1}|^{-1}.
\]
A \emph{bitile} is then any dyadic box of the form
\[
P=I_{P,0}\times I_{P,2} \times \omega_{P,1},
\quad\text{where}\quad
|I_{P,0}| = |I_{P,2}| = 2|\omega_{P,1}|^{-1}.
\]
We will omit the subscripts $\p,P$ if no confusion seems possible.
For notational convenience we will throughout write $I_{1}=I_{0}\wplus I_{2}$.

Dyadic boxes are partially ordered by
\[
P \leq P' :\iff I_{i} \subseteq I_{i}',\ \omega_{i} \supseteq \omega_{i}'.
\]
Writing one of the Haar functions in \eqref{eq:THT-def} as a difference of two characteristic functions we arrive at
\[
\Lambda^{\epsilon}(F_{0},F_{1},F_{2})
=
\sum_{\vec I} \epsilon_{\vec I} \sum_{j \in \{\pm 1\}} j |I_{1}|^{-1}
\tr(1_{I_{1}^{j}}1_{I_{1}^{j}}F_{0} \h_{I_{2}} F_{1} \h_{I_{0}} F_{2}),
\]
where $1_I$ denotes, along with the characteristic function of the interval $I$, also the projection operator
\[
(1_I \varphi)(x)=1_I(x)\varphi(x).
\]
Inserting identity operators (expanded in the Walsh basis) between characteristic functions we obtain
\[
\sum_{\vec I} \epsilon_{\vec I} \sum_{j \in \{\pm 1\}} j
\sum_{\omega_{1} : |\omega_{1}| = 2|I_{1}|^{-1}} 2|I_{1}|^{-2}\\
\tr\big(
1_{I_{1}^{j}}(w_{l_{1}}\otimes w_{l_{1}})1_{I_{1}^{j}} F_{0}
\h_{I_{2}} F_{1}
\h_{I_{0}} F_{2}\big).
\]
Changing the order of summation we obtain
\[
\Lambda^{\epsilon}(F_{0},F_{1},F_{2})
=
\sum_{P \text{ bitile}} \epsilon_{\vec I_{P}}
\Lambda_{P}(F_{0},F_{1},F_{2}),
\]
where
\[
\Lambda_{\vec I\times \vec\omega}(F_{0},F_{1},F_{2})
:=
\sum_{j \in \{\pm 1\}} j 2|I_{1}|^{-2}
\tr(1_{I_{1}^{j}}(w_{l}\otimes w_{l})1_{I_{1}^{j}}F_{0} \h_{I_{2}} F_{1} \h_{I_{0}} F_{2}).
\]
Note that each $l$ can be replaced by any frequency from $\omega$ since this only multiplies the corresponding character by a constant on each of the intervals $I_{i}^{j}$.

\subsection{Time-frequency projections}
We begin by collecting desirable properties of time-frequency projections.
\begin{definition}
\label{tile-proj}
We call orthogonal projections $\Pi^{(i)}_{\p}$, acting on $L^{2}(x_{i-1},x_{i+1})$ and indexed by tiles $\p$, \emph{time-frequency projections} if they satisfy the following conditions.
\begin{enumerate}
\item\label{tile-proj:orth} (Orthogonality) The projections $\Pi^{(i)}_{\p}$ corresponding to disjoint tiles are orthogonal.
\item\label{tile-proj:compatible} (Scale compatibility) Bitile projections $\Pi^{(i)}_{P}$ are well-defined (there are two ways to write a bitile as a disjoint union of tiles, and the corresponding sums of tile projections are equal).
\item\label{tile-proj:support} (Support) $\supp \Pi_{\p}^{(i)}F_{i} \subset I_{i-1}\times I_{i+1}$.
\end{enumerate}
\end{definition}
A collection of bitiles $\mathbf{P}$ is called \emph{convex} if $P,P''\in\mathbf{P}$, $P\leq P'\leq P''$ implies $P'\in\mathbf{P}$.
The union of any finite convex collection of bitiles $\mathbf{P}$ can be written as the union of a collection of disjoint tiles $\mathbf{p}$ (this is proved by induction on the number of bitiles, cf.\ \cite[Lemma 1.7]{MR2692998}).
Given time-frequency projections, this allows us to consider the projections
\[
\Pi_{\mathbf{P}}^{(i)}F_{i} := \sum_{\p\in\mathbf{p}} \Pi_{\p}^{(i)}F_{i}.
\]
The property \eqref{tile-proj:compatible} implies that these projections do not depend on the choice of $\mathbf{p}$, cf.\ \cite[Corollary 1.9]{MR2692998}.

\begin{definition}
We call time-frequency projections \emph{adapted} to $F_{0}$ if for every choice of $F_{1},F_{2}$, every bitile $P$, and any convex collection of bitiles $\mathbf{P}\ni P$ we have
\begin{equation}
\label{eq:Lambda-tile-proj}
\Lambda_{P}(F_{0},F_{1},F_{2}) = \Lambda_{P}(\Pi_{\mathbf{P}}^{(0)}F_{0},\Pi_{\mathbf{P}}^{(1)}F_{1},\Pi_{\mathbf{P}}^{(2)}F_{2}).
\end{equation}
\end{definition}

The existence of adapted time-frequency projections suffices to establish restricted type bounds on the dyadic triangular Hilbert transform in the local $L^{2}$ range.
\begin{proposition}
\label{prop:restricted-type}
Let $E_{i} \subset A_{0}^{2}$, $i\in\{0,1,2\}$, be measurable sets and $|F_{i}| \leq 1_{E_{i}}$ be functions for which there exist time-frequency projections adapted to $F_{0}$.
Then
\[
|\Lambda^{\epsilon}(F_{0},F_{1},F_{2})|
\lesssim
a_{1}^{1/2} a_{2}^{1/2} (1+\log\frac{a_{0}}{a_{1}}),
\]
where $a_{i}=|E_{\sigma(i)}|$ is a decreasing rearrangement, that is, $\sigma$ is a permutation of $\{0,1,2\}$ and $a_{0}\geq a_{1}\geq a_{2}$.
The implied constant is independent of the choices of the scalars $|\epsilon_{\vec I}|\leq 1$.
\end{proposition}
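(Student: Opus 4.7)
The plan is to run the standard tile-level size/energy argument of time-frequency analysis and squeeze the logarithmic factor out of a Carleson-style dyadic sum. First, I would combine the tile decomposition $\Lambda^{\epsilon} = \sum_{P} \epsilon_{\vec I_{P}} \Lambda_{P}$ from Section~\ref{sec:wave-packets} with the adapted-projection identity \eqref{eq:Lambda-tile-proj}, which replaces each $F_{i}$ by $\Pi_{\mathbf{P}}^{(i)} F_{i}$ inside $\Lambda_{P}$ for any convex collection $\mathbf{P} \ni P$. For each $i$ and convex tree $T$ of bitiles define the $i$-size
\[
\size_{i}(T) := \sup_{T' \subset T} |I_{T'}|^{-1/2} \|\Pi_{T'}^{(i)} F_{i}\|_{2}.
\]
Orthogonality (Definition~\ref{tile-proj}(\ref{tile-proj:orth})) then supplies the trivial bound $\size_{i}(T) \leq \|F_{i}\|_{\infty} \leq 1$ together with the Bessel-type energy inequality $\sum_{T \in \mathbf{F}} |I_{T}|\,\size_{i}(T)^{2} \lesssim \|F_{i}\|_{2}^{2} \leq |E_{i}|$ for any forest $\mathbf{F}$ of pairwise disjoint trees.

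Second, I would invoke the single tree estimate promised at the end of Section~\ref{sec:wave-packets} in the form
\[
\Big|\sum_{P \in T} \Lambda_{P}(F_{0},F_{1},F_{2})\Big|
\lesssim |I_{T}|\,\size_{0}(T)\,\size_{1}(T)\,\size_{2}(T),
\]
and run the Fefferman--Thiele selection algorithm: for each $i$ and each $n \geq 0$ extract a forest $\mathbf{F}_{n}^{(i)}$ of disjoint trees of $i$-size approximately $2^{-n}$, with mass bound $\sum_{T \in \mathbf{F}_{n}^{(i)}} |I_{T}| \lesssim \min(|E_{i}|\,2^{2n}, 1)$ (the second bound coming from the disjointness of trees in $[0,1)$).

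To produce the asymmetric estimate, let $i^{*} \in \{0,1,2\}$ realize $|E_{i^{*}}| = a_{0}$ and let $\{i_{1},i_{2}\}$ carry $a_{1}, a_{2}$. On the forest at level $\sigma_{i^{*}} \sim 2^{-n}$ apply Cauchy--Schwarz in the remaining two indices:
\[
\sum_{T \in \mathbf{F}_{n}^{(i^{*})}} |\Lambda_{T}|
\lesssim 2^{-n} \Bigl(\sum_{T} |I_{T}|\,\size_{i_{1}}(T)^{2}\Bigr)^{1/2} \Bigl(\sum_{T} |I_{T}|\,\size_{i_{2}}(T)^{2}\Bigr)^{1/2}
\lesssim 2^{-n}\sqrt{a_{1} a_{2}}.
\]
The logarithmic factor then emerges by a careful choice of cutoff: for $n$ in the window in which the total forest mass has not yet saturated at the trivial bound $1$, the Cauchy--Schwarz bound is essentially sharp and contributes $\sqrt{a_{1}a_{2}}$ per level; for $n$ above roughly $\tfrac12\log_{2}(a_{0}/a_{1})$ the pairing is reshuffled to use the $F_{i^{*}}$ energy inequality against the trivial bound on one of the other two indices, producing a geometrically convergent tail. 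The two regimes combine to give the bound $\sqrt{a_{1}a_{2}}\,(1 + \log(a_{0}/a_{1}))$.

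The main obstacle is the precise bookkeeping in the size-level sum to isolate exactly the logarithmic factor $1 + \log(a_{0}/a_{1})$, rather than the weaker $1 + \log(1/a_{1})$ one obtains from ignoring $a_{0}$ altogether; this requires carefully pairing all three size/energy bounds and exploiting both the $L^{2}$ energy and the $L^{\infty}$ bound on $F_{i^{*}}$. A secondary subtlety is that although the adapted-projection hypothesis privileges $F_{0}$, once the projections are fixed the size framework treats the three indices symmetrically, so the proof can be organized around the decreasing rearrangement with no loss.
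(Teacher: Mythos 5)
Your proposal follows the same broad framework as the paper — tile decomposition, adapted projections, size/energy bookkeeping, the single tree estimate, and a level-by-level tree selection whose summation produces the $1+\log(a_0/a_1)$ factor — so the route is not genuinely different. However, there is one concrete gap in the intermediate step. The Bessel-type inequality you assert, namely
\[
\sum_{T\in\mathbf{F}}|I_T|\,\size_i(T)^2\lesssim\|F_i\|_2^2
\quad\text{for \emph{any} forest $\mathbf{F}$ of pairwise disjoint trees},
\]
is \emph{not} a consequence of orthogonality (Definition~\ref{tile-proj}~\ref{tile-proj:orth}) alone, and it is false in that generality: the size of a tree is a supremum over subtrees, and a forest of pairwise disjoint trees can consist of many trees whose sizes are all comparable to $\|F_i\|_\infty$, making $\sum_T|I_T|\size_i(T)^2$ as large as $\|F_i\|_\infty^2$ times the total mass, far exceeding $\|F_i\|_2^2$ when $F_i$ is concentrated. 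What is true is the mass bound produced by the tree-selection algorithm, i.e.\ Proposition~\ref{prop:tree-selection}: trees selected at a size threshold $2^{-n}$ \emph{for index $i$} have $\sum_T|\vec I_T|\lesssim 2^{2n}\|F_i\|_2^2$. Consequently, in your Cauchy--Schwarz step the forest $\mathbf{F}_n^{(i^*)}$ was selected for the index $i^*$, and there is no a priori control on $\sum_T|I_T|\size_{i_1}(T)^2$ or $\sum_T|I_T|\size_{i_2}(T)^2$ over it; that step does not go through as written.

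The paper sidesteps this by normalizing $\tilde F_i=F_i/|E_i|^{1/2}$ (so that $\|\tilde F_i\|_2\leq 1$ for all $i$) and then running the selection at each level $n$ \emph{for all three indices}. At level $n$ this produces a single forest $\mathbf{T}_n$ whose mass is $\lesssim 2^{-2n}$ (uniformly in $i$, by the common $L^2$ normalization) and on which all three sizes are $\leq\min(2^n,\|\tilde F_i\|_\infty)$. The single tree estimate then gives
\[
|\Lambda^\epsilon|\lesssim\sum_{n\le n_2}2^{-2n}\prod_{i=0}^{2}\min(2^n,2^{n_i}),
\]
which is increasing geometric for $n<n_0$, constant for $n_0\le n\le n_1$, and decreasing geometric for $n>n_1$, yielding $2^{n_0}(1+n_1-n_0)\lesssim a_0^{-1/2}(1+\log(a_0/a_1))$. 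If you reorganize your argument to select simultaneously for all three indices and use only the mass bound from Proposition~\ref{prop:tree-selection}, your sketch becomes correct and matches the paper; the Cauchy--Schwarz/Bessel reshuffle is neither needed nor available. One further detail you omitted: the paper first invokes the Loomis--Whitney inequality to discard contributions from large scales, a minor but necessary localization step.
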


We finish this section with the construction of time-frequency projections adapted to \eqref{eq:F0-diagonal} and \eqref{eq:F0-fiberwise-character}.
For indices $0$ and $2$ we use the projections
\begin{equation}
\label{eq:Pi2}
\Pi^{(2)}_{\p} F_{2}(x_{0},x_{1}) := 1_{I_{0}}(x_{0}) \< F_{2}(x_{0},\cdot), w_{I_{1}\times \omega_{1}} \> w_{I_{1}\times \omega_{1}}(x_{1})
\end{equation}
and
\begin{equation}
\label{eq:Pi0}
\Pi^{(0)}_{\p} F_{0}(x_{2},x_{1}) := 1_{I_{2}}(x_{2}) \< F_{0}(x_{2},\cdot), w_{I_{1}\times \omega_{1}} \> w_{I_{1}\times \omega_{1}}(x_{1}).
\end{equation}
The structural information given by \eqref{eq:F0-diagonal} and \eqref{eq:F0-fiberwise-character} is encoded in the projections $\Pi^{(1)}$.

\subsubsection{One-dimensional functions}
\label{sec:1dimfct}
Suppose \eqref{eq:F0-diagonal}.
Then we have
\[
\Pi^{(0)}_{\p}F_{0}(x_{1},x_{2}) = 1_{I_{1}}(x_{1}) (\Pi_{I_{2}\times a\wtimes\omega_{1}}F_{0}(\cdot,x_{1}))(x_{2}),
\]
where the projection on the right-hand side is a one-dimensional time-frequency projection (as defined e.g.\ in \cite{MR2997005}) with a possibly multidimensional range.
In this case we define
\[
\Pi^{(1)}_{P}F_{1}(x_{2},x_{0}) := 1_{I_{0}}(x_{0}) (\Pi_{I_{2}\times a\wtimes\omega_{1}}F_{1}(\cdot,x_{0}))(x_{2}).
\]

\subsubsection{Fiberwise characters}
\label{sec:fiberwisechar}
Suppose \eqref{eq:F0-fiberwise-character}.
Then we have
\[
\Pi^{(0)}_{\p}F_{0}(x_{1},x_{2}) = 1_{I_{1}}(x_{1}) 1_{I_{2}}(x_{2}) 1_{\omega_{1}}(N_{x_{2}}) F_{0}(x_{1},x_{2}).
\]
In this case we define
\[
\Pi^{(1)}_{\p}F_{1}(x_{2},x_{0}) := 1_{I_{0}}(x_{0}) 1_{I_{2}}(x_{2}) 1_{\omega_{1}}(N_{x_{2}}) F_{1}(x_{2},x_{0}).
\]
The projections $\Pi^{(1)}$ constructed above satisfy \eqref{eq:Lambda-tile-proj} only for bitiles with $I_{i}\subseteq A_{0}$, which explains the truncation in Theorem~\ref{thm:main}.

\section{Single tree estimate}
\label{sec:single-tree}
A \emph{tree} $T$ is a convex set of bitiles that contains a maximal element
\[
P_{T} = \vec I_{T} \times \vec \omega_{T} = I_{T,0} \times I_{T,2} \times \omega_{T,1}.
\]
Equivalently, a tree can be described by a top frequency $\xi_{T,1}$ and a convex collection of space boxes $\DI_{T}$.
The corresponding tree $T$ then consists of all bitiles $P = \vec I\times\vec\omega$ with $\vec I\in \DI_{T}$ and $\xi_{T,1}\in\omega_{1}$.

For a convex collection $\mathbf{P}$ of bitiles define
\begin{equation}
\label{eq:size}
\size^{(i)}(\mathbf{P},F_{i})
:=
\sup_{T\subset \mathbf{P} \text{ tree}} |\vec I_{T}|^{-1/2} \|\Pi_{T}^{(i)}F_{i}\|_{2}.
\end{equation}
For a collection $\mathbf{P}$ of bitiles write
\[
\Lambda^{\epsilon}_{\mathbf{P}}(F_{0},F_{1},F_{2})
:=
\sum_{P\in\mathbf{P}} \epsilon_{\vec I_{P}}
\Lambda_{P}(F_{0},F_{1},F_{2}).
\]
The objective of this section is to show that Definition~\ref{tile-proj} implies
\begin{equation}
\label{eq:single-tree-estimate:symmetric}
|\Lambda^{\epsilon}_{T}(F_{0},F_{1},F_{2})|
\lesssim
|\vec I_{T}| \prod_{i=0}^{2} \size^{(i)}(T,F_{i}),
\end{equation}
where $T$ is a tree and the implied constant is absolute.
It follows from Definition~\ref{tile-proj} that
\[
|\vec I_{P}|^{-1/2} \|\Pi^{(i)}_{P} F_{i}\|_{L^{2}(I_{i-1,P}\times I_{i+1,P})} \lesssim \size^{(i)}(T,F_{i})
\quad
\text{ for all } P\in T.
\]
Thus in view of \eqref{eq:Lambda-tile-proj} it suffices to show
\begin{equation}
\label{eq:single-tree-estimate:symmetric}
|\Lambda^{\epsilon}_{T}(F_{0},F_{1},F_{2})|
\lesssim
|\vec I_{T}| \prod_{i=0}^{2} \sup_{\vec I\in \DI_{T} \cup \Leaves_{T}} |\vec I|^{-1/2} \|F_{i}\|_{L^{2}(I_{i-1}\times I_{i+1})},
\end{equation}
where $\Leaves_{T}$ denotes the collection of leaves of a tree, that is, maximal elements of $\DI$ contained in a member of $T$ that are not themselves members of $\DI_{T}$.
By modulation we may assume $\xi_{T,1}=0$.
The tree operator can be written
\[
\sum_{\vec I\in \DI_{T}} \epsilon_{\vec I} |I_{1}|^{-2}
(\tr((1\otimes 1)1_{I_{1}} F_{0} \h_{I_{2}} F_{1} \h_{I_{0}} F_{2} \h_{I_{1}})
+
\tr((1\otimes 1)\h_{I_{1}} F_{0} \h_{I_{2}} F_{1} \h_{I_{0}} F_{2} 1_{I_{1}})).
\]
The two summands are symmetric (under permuting the indices $0$ and $2$) and we consider only the first of them.
With the convention that the domain of integration is $x_{i},y_{i}\in I_{i}$ and the dyadic intervals have size $|I_{i}|=2^{k}$ we have
\begin{multline*}
\tr((1\otimes 1)1_{I_{1}} F_{0} \h_{I_{2}} F_{1} \h_{I_{0}} F_{2} \h_{I_{1}})\\
=
\int F_{0}(x_{1},x_{2}) r_{k}(x_{2}) F_{1}(x_{2},x_{0}) r_{k}(x_{0}) F_{2}(x_{0},y_{1}) r_{k}(y_{1})
\dif x_{1} \dif x_{2} \dif x_{0} \dif y_{1}.
\end{multline*}
The change of variables $x_{1}=x_{2}+y_{0}$, $y_{1}=x_{0}+y_{2}$ gives
\begin{align*}
&\int F_{0}(x_{2}+y_{0},x_{2}) r_{k}(x_{2}) F_{1}(x_{2},x_{0}) r_{k}(x_{0}) F_{2}(x_{0},x_{0}+y_{2}) r_{k}(x_{0}+y_{2})
\dif y_{0} \dif x_{2} \dif x_{0} \dif y_{2}\\
&=\int \tilde F_{0}(y_{0},x_{2}) r_{k}(x_{2}) F_{1}(x_{2},x_{0}) \tilde F_{2}(x_{0},y_{2}) r_{k}(y_{2})
\dif y_{0} \dif x_{2} \dif x_{0} \dif y_{2},
\end{align*}
where $\tilde F_{0}(y_{0},x_{2}):=F_{0}(x_{2}+y_{0},x_{2})$ and $\tilde F_{2}(x_{0},y_{2}) := F_{2}(x_{0},x_{0}+y_{2})$.

Thus the first half of the tree operator can be written as a single tree operator from \cite[\textsection 3]{MR2990138} with square-dependent coefficients.
The first step in the proof of \cite[Proposition 4]{MR2990138} is an application of the Cauchy--Schwarz inequality in the sum over squares, so it still works in our situation.
This, together with \cite[(2.2)]{MR2990138}, gives the required estimate.

\section{Tree selection and local $L^{2}$ bounds}
\subsection{The tree selection algorithm}
We organize bitiles into trees closely following the argument in \cite[Lemma 2.2]{MR2997005}.
Here and later we use coordinate projections $\pi_{(i)}: \W^{3}\to\W^{2}, (x_{i-1},x_{i},x_{i+1}) \mapsto (x_{i-1},x_{i+1})$.
\begin{proposition}
\label{prop:tree-selection}
Let $n\in\Z$, $i\in\{0,1,2\}$, a function $F_{i}$, and a system of (not necessarily adapted) time-frequency projections $\Pi^{(i)}$ be given.
Then every finite convex collection of bitiles $\mathbf{P}$ can be partitioned into a convex collection of bitiles $\mathbf{P}'$ with
\[
\size_{i}(\mathbf{P}',F_{i}) \leq 2^{-n}
\]
and a further convex collection of bitiles that is the disjoint union of a collection of convex trees $\mathbf{T}$ with
\begin{equation}
\label{eq:tree-selection:tree-counting-bound}
\sum_{T\in\mathbf{T}, \vec I_{T} \subset \vec J} |\vec I_{T}|
\leq
9 \cdot 2^{2n} \|1_{\pi_{(i)}\vec J} F_{i}\|_{2}^{2},
\quad \vec J\in\DI.
\end{equation}
\end{proposition}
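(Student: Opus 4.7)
My plan is to run the standard greedy tree selection algorithm used throughout time-frequency analysis, following very closely \cite[Lemma 2.2]{MR2997005}. Initialize $\mathbf{P}^{(0)}:=\mathbf{P}$ and $\mathbf{T}:=\emptyset$; at each stage $k\geq 0$, while $\size^{(i)}(\mathbf{P}^{(k)}, F_{i}) > 2^{-n}$, pick a tree $T^{\star}_{k}\subseteq \mathbf{P}^{(k)}$ with
\[
\|\Pi^{(i)}_{T^{\star}_{k}}F_{i}\|_{2}\geq 3^{-1/2}\cdot 2^{-n}\,|\vec I_{T^{\star}_{k}}|^{1/2},
\]
subject to the extremality condition that $|\vec I_{T^{\star}_{k}}|$ is maximal and $\xi_{T^{\star}_{k},1}$ is extreme in a fixed linear order on the endpoints of $\omega_{1}$'s. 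The slack factor $3^{-1/2}$ is what buys the extremality. Then remove from $\mathbf{P}^{(k)}$ the upward convex closure
\[
\hat T_{k} := \{P\in\mathbf{P}^{(k)}:\exists P^{\dagger}\in T^{\star}_{k},\,P^{\dagger}\leq P\},
\]
add to $\mathbf{T}$ a decomposition of $\hat T_{k}$ into at most three convex trees (the central tree $T^{\star}_{k}$ together with at most two auxiliary up-trees), set $\mathbf{P}^{(k+1)}:=\mathbf{P}^{(k)}\setminus\hat T_{k}$, and iterate. Since $\mathbf{P}$ is finite, the algorithm terminates.

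Next I would verify the structural claims: \emph{(a)} $\mathbf{P}^{(k+1)}$ is convex, which follows from $\hat T_{k}$ being an up-set in $\mathbf{P}^{(k)}$; \emph{(b)} every $P\in \hat T_{k}\setminus T^{\star}_{k}$ satisfies $\vec I_{P}\supsetneq \vec I_{T^{\star}_{k}}$; and \emph{(c)} the auxiliary bitiles $\hat T_{k}\setminus T^{\star}_{k}$ organize into at most two convex trees according to the position of $\omega_{P,1}$ relative to $\omega_{T^{\star}_{k},1}$. Claim \emph{(b)} is the core geometric observation: if $\vec I_{P}\subseteq \vec I_{T^{\star}_{k}}$ for $P\geq P^{\dagger}\in T^{\star}_{k}$, then the scale relation $|\omega_{P,1}|\cdot|I_{P,0}|=2$ and the chain $\omega_{P}\subseteq \omega_{P^{\dagger}}\supseteq \omega_{T^{\star}_{k},1}$ together force $\omega_{P,1}\supseteq \omega_{T^{\star}_{k},1}$, hence $P\in T^{\star}_{k}$, contradicting $P\in \hat T_{k}\setminus T^{\star}_{k}$.

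For the counting bound, fix $\vec J\in\DI$. By \emph{(b)}, any auxiliary tree $T\subset \hat T_{k}\setminus T^{\star}_{k}$ has $\vec I_{T}\supsetneq \vec I_{T^{\star}_{k}}$, so $\vec I_{T}\subseteq \vec J$ forces $\vec I_{T^{\star}_{k}}\subseteq \vec J$; thus only selection stages $k$ with $\vec I_{T^{\star}_{k}}\subseteq \vec J$ contribute, each with at most three trees. The selection witness gives $|\vec I_{T^{\star}_{k}}|\leq 3\cdot 2^{2n}\,\|\Pi^{(i)}_{T^{\star}_{k}}F_{i}\|_{2}^{2}$, while properties~\eqref{tile-proj:orth} and \eqref{tile-proj:support} (applied to the pairwise disjoint bitile sets of $\{T^{\star}_{k}\}$) give the Bessel inequality
\[
\sum_{k:\,\vec I_{T^{\star}_{k}}\subseteq \vec J}\|\Pi^{(i)}_{T^{\star}_{k}}F_{i}\|_{2}^{2}\leq \|1_{\pi_{(i)}\vec J}F_{i}\|_{2}^{2}.
\]
Combining, the sum over central trees is bounded by $3\cdot 2^{2n}\,\|1_{\pi_{(i)}\vec J}F_{i}\|_{2}^{2}$, and the multiplicity $3$ from the decomposition of each $\hat T_{k}$ produces the final constant $9=3\cdot 3$.

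The main obstacle is the structural decomposition of $\hat T_{k}$: one must verify that the auxiliary up-bitiles split into exactly two convex trees governed by dyadic frequency positions, and that charging these auxiliaries to the central tree $T^{\star}_{k}$ only loses a bounded multiplicative factor despite the $|\vec I|$'s of the auxiliaries being \emph{larger} than $|\vec I_{T^{\star}_{k}}|$. This rests entirely on the extremality of the selection (maximal $|\vec I_{T^{\star}_{k}}|$ first), without which up-trees of uncontrolled total volume could appear.
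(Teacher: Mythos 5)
Your algorithm diverges from the paper's in a way that introduces real gaps, the most serious being the Bessel inequality you invoke at the end. You assert that the ``pairwise disjoint bitile sets of $\{T^{\star}_{k}\}$'' together with Definition~\ref{tile-proj}\ref{tile-proj:orth} and \ref{tile-proj:support} give
\[
\sum_{k:\,\vec I_{T^{\star}_{k}}\subseteq \vec J}\|\Pi^{(i)}_{T^{\star}_{k}}F_{i}\|_{2}^{2}\leq \|1_{\pi_{(i)}\vec J}F_{i}\|_{2}^{2}.
\]
But property~\ref{tile-proj:orth} concerns disjoint \emph{tiles}, not disjoint collections of bitiles. Two bitile collections that share no bitile can still overlap in phase space: after you remove the \emph{up}-set $\hat T_{k}$, a later bitile $P'$ may satisfy $P'<P$ for some $P\in T^{\star}_{k}$, and then $P'\cap P\neq\emptyset$ even though $P'\notin T^{\star}_{k}$. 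In that situation the tile decompositions of $T^{\star}_{k}$ and a later $T^{\star}_{m}$ overlap, the ranges of $\Pi^{(i)}_{T^{\star}_{k}}$ and $\Pi^{(i)}_{T^{\star}_{m}}$ need not be orthogonal, and the inequality above is unjustified. This is precisely the difficulty the paper is organized to avoid: it removes the \emph{down}-set spanned by the selected tree (so later bitiles never lie below earlier ones), it splits $\Pi^{(i)}_{T}$ into the top-bitile piece plus the two sums $\sum_{P\in T_{j}}\Pi^{(i)}_{P^{-j}}$, and it proves disjointness of the concrete tiles $P^{-j}$ ($P\in (T_{m})_{j}$) across the selected trees using the frequency extremality of the selection. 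That argument has no analogue in your write-up.

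Your structural claim~\emph{(b)} is also false as stated. From $P^{\dagger}\leq P$ with $P^{\dagger}\in T^{\star}_{k}$ and $\vec I_{P}\subseteq \vec I_{T^{\star}_{k}}$ you only get that both $\omega_{P,1}$ and $\omega_{T^{\star}_{k},1}$ lie inside $\omega_{P^{\dagger},1}$ with $|\omega_{P,1}|\geq|\omega_{T^{\star}_{k},1}|$; two dyadic intervals of different lengths inside a common ancestor need not be nested, so $\omega_{P,1}\supseteq\omega_{T^{\star}_{k},1}$ does not follow, and $P$ can fail to contain the top frequency and hence fail to lie in $T^{\star}_{k}$. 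Consequently the auxiliary bitiles can have $\vec I_{P}\subseteq\vec I_{T^{\star}_{k}}$, which undermines your charging of auxiliary trees to the central one and the ``at most three trees per stage'' bookkeeping. Finally, the paper does not use $|\vec I_{T}|$-maximality at all; it first removes maximal \emph{bitiles} $P_{k}$ with heavy top (pairwise disjoint by maximality), then runs two separate iterations, for $j=+1$ and $j=-1$, selecting trees with extremal endpoint of $\omega_{T,1}$ and removing their down-sets. Each of the three passes contributes $3\cdot2^{2n}$ to the counting function, giving the constant $9\cdot2^{2n}$; your ``$3\times 3$'' comes instead from an (unproved) multiplicity-three decomposition of each $\hat T_{k}$, which is a different and, as it stands, broken accounting.
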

The latter bound includes both an $L^{1}$ estimate (taking $\vec J$ large enough to contain all time intervals in $\mathbf{P}$) and a $\mathrm{BMO}$ estimate (noting $\|1_{\pi_{(i)}\vec J} F_{i}\|_{2}^{2} \leq |\vec J| \|F_{i}\|_{\infty}^{2}$) for the counting function $\sum_{T\in\mathbf{T}} 1_{\vec I_{T}}$.
\begin{proof}
We will remove three collections of trees, each of which satisfies \eqref{eq:tree-selection:tree-counting-bound} with a smaller constant.
At each step we remove a tree that is also a down-set, thus ensuring that both the remaining collection $\mathbf{P}'$ and the collection of all removed tiles are convex.

Replacing $F_{i}$ by $2^{n}F_{i}$ we may assume $n=0$.
We write every bitile $P$ as $P^{+1}\cup P^{-1}$, where the tiles $P^{j}$, $j=\pm 1$, are given by $\vec I_{P} \times \omega_{P,1}^{j}$.

For a tree $T$ write
\[
T_{j} := \{ P\in T : P^{j} \leq P_{T}\},
\quad
j=\pm 1.
\]
Then
\[
\Pi_{T}^{(i)} F_{i}
=
\Pi_{P_{T}}^{(i)} F_{i}
+
\sum_{j=\pm 1}\sum_{P\in T_{j}} \Pi_{P^{-j}}^{(i)} F_{i},
\]
and this sum is orthogonal by Definition~\ref{tile-proj}~\ref{tile-proj:orth}.

Let $\{P_{1},\dots,P_{n}\}$ be the collection of maximal bitiles in $\mathbf{P}$ that satisfy
\[
\| \Pi_{P_{k}}^{(i)} F_{i} \|_{2}^{2} > 3^{-1} | \vec I_{P_{k}} |.
\]
These bitiles are necessarily pairwise disjoint, so we have
\[
\sum_{k : \vec I_{P_{k}} \subset \vec J} | \vec I_{P_{k}} |
<
3 \sum_{k : \vec I_{P_{k}} \subset \vec J} \| \Pi_{P_{k}}^{(i)} F_{i} \|_{2}^{2}
\leq
3 \| 1_{\pi_{(i)}\vec J} F_{i} \|_{2}^{2}
\]
for every $\vec J\in\DI$, where the last inequality follows from parts \ref{tile-proj:orth} and \ref{tile-proj:support} of Definition~\ref{tile-proj}.
Thus, removing the bitiles $P\leq P_{k}$ from $\mathbf{P}$, we may assume
\[
\| \Pi_{P}^{(i)} F_{i} \|_{2}^{2} \leq 3^{-1} | \vec I_{P} |,
\quad
P\in\mathbf{P}.
\]
The next step will be done twice, for $j=\pm 1$.
In each case we remove a collection of trees $\mathbf{T}_{j}$ such that for every remaining tree $T$ we have
\begin{equation}
\label{eq:tree-selection:up}
\sum_{P\in T_{j}} \| \Pi_{P^{-j}}^{(i)} F_{i} \|^{2} \leq 3^{-1} |\vec I_{T}|^{2}.
\end{equation}
The collection $\mathbf{T}_{j}=\{T_{1},T_{2},\dots\}$ is selected iteratively.
Suppose that $T_{1},\dots,T_{k}$ have been selected and suppose that \eqref{eq:tree-selection:up} is violated for some remaining tree $T \subset \mathbf{P}\setminus T_{1}\cup\dots\cup T_{k}$.
Choose such tree for which either the left endpoint of $\omega_{T,1}$ is minimal (if $j=-1$) or the right endpoint is maximal (for $j=+1$) and let $T_{k+1}\subset\mathbf{P}$ be the down-set spanned by the chosen tree.

We claim that the tiles of the form $P_{m}^{-j}$, $P_{m}\in (T_{m})_{j}$, are pairwise disjoint.
This is clear within each tree, so assume for contradiction $P_{k}^{-j} < P_{l}^{-j}$, $k\neq l$.
In particular, we have $P_{k} < P_{l}$, and this implies $k<l$, since otherwise $P_{k}$ should have been included in $T_{l}$.
On the other hand, $\omega_{P_{k},1}^{-j} \supsetneq \omega_{P_{l},1}^{-j}$ implies $\omega_{P_{k},1}^{-j} \supseteq \omega_{P_{l},1} \supsetneq \omega_{P_{l},1}^{j} \supseteq \omega_{T_{l},1}$, whereas $\omega_{T_{k},1} \subseteq \omega_{P_{k},1}^{j}$.
Thus $\omega_{T_{k},1}$ is either to the right (if $j=-1$) or to the left (if $j=+1$) from $\omega_{T_{l},1}$, in both cases contradicting the choice of $T_{k}$.

Violation of \eqref{eq:tree-selection:up} for $T_{k}\in\mathbf{T}_{j}$ and parts \ref{tile-proj:orth} and \ref{tile-proj:support} of Definition~\ref{tile-proj} give
\[
\sum_{k : \vec I_{T_{k}} \subset \vec J} |\vec I_{T_{k}}|
<
\sum_{k : \vec I_{T_{k}} \subset \vec J} 3 \sum_{P\in (T_{k})_{j}} \| \Pi_{P^{-j}}^{(i)} F_{i} \|_{2}^{2}
\leq
3 \| 1_{\pi_{(i)}\vec J} F_{i} \|_{2}^{2},
\]
as required.
For each remaining tree we will have
\[
\| \Pi_{P_{T}}^{(i)} F_{i} \|^{2}
+
\sum_{j=\pm 1}\sum_{P\in T_{j}} \| \Pi_{P^{-j}}^{(i)} F_{i} \|^{2}
\leq
(3^{-1}+3^{-1}+3^{-1}) |\vec I_{T}|,
\]
and this gives the required estimate for $\size_{i}(T,F_{i})$.
\end{proof}

\subsection{Local $L^{2}$ bounds (triangle $c$)}
\begin{proof}[Proof of Proposition~\ref{prop:restricted-type}]
By the Loomis--Whitney inequality we may assume $\epsilon_{\vec I}=0$ whenever $|I_{i}|>2^{k}$.

Normalizing $\tilde F_{i} = F_{i}/|E_{i}|^{1/2}$ we have to show
\begin{equation}
\label{eq:restricted-type-rescaled}
|\Lambda_{\mathbf{P}}^{\epsilon}(\tilde F_{0},\tilde F_{1},\tilde F_{2})|
\lesssim
a_{0}^{-1/2} (1+\log\frac{a_{0}}{a_{1}})
\end{equation}
with a constant independent of the (finite) convex collection of bitiles $\mathbf{P}$.
We have $\size^{(i)}(\tilde F_{i}) \leq \|\tilde F_{i}\|_{\infty} \leq |E_{i}|^{-1/2} = a_{\sigma^{-1}(i)}^{-1/2}$ and $\|\tilde F_{i}\|_{2}\leq 1$.
Fix integers $n_{i}$ such that $2^{n_{i}-1} < a_{i}^{-1/2} \leq 2^{n_{i}}$; note that in particular $n_{0}\leq n_{1}\leq n_{2}$.
Running the tree selection algorithm (Proposition~\ref{prop:tree-selection}) iteratively at each scale $n\leq n_{2}$ for each $i\in\{0,1,2\}$ we obtain collections of trees $\mathbf{T}_{n}$ with
\[
\sum_{T\in\mathbf{T}_{n}} |I_{T,i}|^{2} \lesssim 2^{-2n}
\]
and
\[
\size^{(i)}(T,\tilde F_{i}) \leq \min (2^{n}, 2^{n_{\sigma^{-1}(i)}}),
\quad T\in \mathbf{T}_{n}.
\]
Summing the single tree estimate \eqref{eq:single-tree-estimate:symmetric} over all trees we obtain
\[
|\Lambda^{\epsilon}(\tilde F_{0},\tilde F_{1},\tilde F_{2})|
\lesssim
\sum_{n\leq n_{2}} 2^{-2n}
\prod_{i=0}^{2} \min (2^{n}, 2^{n_{i}}).
\]
The sum over $n$ is an increasing geometric series for $n<n_{0}$ and a decreasing geometric series for $n>n_{1}$.
In particular, the sum is dominated by the terms $n_{0}\leq n\leq n_{1}$, that is, we have the estimate
\[
2^{n_{0}}(1+n_{1}-n_{0})
\lesssim
a_{0}^{-1/2} (1+\log\frac{a_{0}}{a_{1}})
\]
as required.
\end{proof}

\section{Fiberwise multi-frequency Calder\'on--Zygmund decomposition and an extended range of exponents}
\label{sec:mfcz}
In order to extend the range of exponents in our main result we perform a fiberwise multi-frequency Calder\'on--Zygmund decomposition.
Here, in contrast to the local $L^{2}$ range, we have to use the special form of the time-frequency projections $\Pi^{(0)}$ and $\Pi^{(2)}$.

Our decomposition unites the main features of the one-dimensional multi-frequency Calder\'on--Zygmund decomposition in \cite{MR2997005} and the fiberwise single-frequency Calder\'on--Zygmund decomposition in \cites{MR3161332,MR2990138}.
A useful simplification with respect to \cite{MR2997005} is that we do not attempt to control the size of the good function, this corresponds to the observation that the argument on page 1709 of \cite{MR2997005} works directly for $a$ in place of $a_{m}$.

\subsection{Triangles $b_{2}$ and $d_{12}$}
\begin{theorem}
\label{thm:main-restricted-type}
Let $0< \alpha_{0} \leq 1/2 \leq \alpha_{2} < 1$ and $-1/2<\alpha_{1}<1/2$ satisfy \eqref{eq:Lp-range}.
Then for any measurable sets $E_{i} \subset A_{0}^{2}$, $i\in\{0,1,2\}$ there exists a major subset $E_{1}'\subset E_{1}$ (which can be taken equal to $E_{1}$ if $\alpha_{1}>0$)
such that for any dyadic test functions $|F_{i}| \leq 1_{E_{i}}$, $|F_{1}| \leq 1_{E_{1}'}$ with \eqref{eq:F0-diagonal} or \eqref{eq:F0-fiberwise-character} we have
\[
|\Lambda^{\epsilon}(F_0,F_1,F_2)|
\lesssim_{\alpha_{0},\alpha_{1},\alpha_{2}}
\prod_{i=0}^{2} |E_{i}|^{\alpha_{i}},
\]
where the implied constant is independent of the choices of the scalars $|\epsilon_{\vec I}|\leq 1$ with $\epsilon_{\vec I}=0$ whenever $I_{i} \not\subset A_{0}$.
\end{theorem}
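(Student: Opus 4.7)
The plan is to establish a generalized restricted weak-type bound at the extreme points of the region $b_{2}\cup d_{12}$ and then recover the full range by real interpolation with Proposition~\ref{prop:restricted-type}. Fix sets $E_{0},E_{1},E_{2}\subset A_{0}^{2}$ and normalize sizes. Both structural cases \eqref{eq:F0-diagonal} and \eqref{eq:F0-fiberwise-character} can be handled in parallel, since in both the projection $\Pi^{(1)}_{\p}F_{1}(x_{2},x_{0})$ localizes the $x_{2}$-variable to $I_{2}$ and records a character attached to $\p$ on that interval — either $e(a\wtimes l(\omega_{1})\wtimes x_{2})$ in the diagonal case or the selector $1_{\omega_{1}}(N_{x_{2}})$ in the fiberwise character case.

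The core step is a fiberwise multi-frequency Calder\'on--Zygmund decomposition of $F_{1}$ in the $x_{2}$-variable. For each fiber $x_{0}$ one selects the maximal dyadic intervals $J\subset A_{0}$ on which a multi-frequency maximal function of $1_{E_{1}}(\cdot,x_{0})$ — formed against the family of frequencies appearing in tiles $\p$ with $x_{0}\in I_{0}$ at each given scale — exceeds a threshold $\lambda$ chosen as a power of the normalized sizes. Collecting these fiberwise bad intervals produces an exceptional set $\Omega\subset A_{0}^{2}$; a weak-type estimate yields $|\Omega|\ll |E_{1}|$, so $E_{1}':=E_{1}\setminus\Omega$ is a major subset. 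Writing $F_{1}=g+\sum_{J}b_{J}$, where each $b_{J}(\cdot,x_{0})$ cancels against the character family at scale $|J|$ attached to $J$, one has $\|g\|_{2}^{2}\lesssim\lambda|E_{1}|$, and the good contribution is handled by applying Proposition~\ref{prop:restricted-type} to $(F_{0},g,F_{2})$ for a correctly calibrated $\lambda$. Following the remark at the start of Section~\ref{sec:mfcz}, we do not attempt to control $\|g\|_{\infty}$, which removes the loss in the multi-frequency maximal bound that would otherwise appear.

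For the bad contribution, the cancellation of $b_{J}$ against the character family forces $\Pi^{(1)}_{\p}b_{J}$ to vanish on bitiles $\p$ with $I_{2}\supseteq J$ whose frequency data lie in the local family; the surviving bitiles have $I_{2}\subseteq J\subseteq\Omega_{x_{0}}$ for some $x_{0}\in I_{0}$, and thus are fiberwise confined to the exceptional set. Running the tree selection algorithm of Proposition~\ref{prop:tree-selection} on these bitiles, the BMO counting bound \eqref{eq:tree-selection:tree-counting-bound} is evaluated against $1_{\Omega}$ rather than $F_{1}$, and the single tree estimate \eqref{eq:single-tree-estimate:symmetric} then yields a geometric series over scales whose sum is a power of $|\Omega|$. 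Balancing the good and bad parts in $\lambda$ produces the claimed inequality in the desired restricted weak sense.

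The main obstacle I anticipate is the synthesis of the two sources of cancellation: the multi-frequency John--Nirenberg-type $L^{2}$ control of $b_{J}$ developed in \cite{MR2997005} must be reconciled with the fiberwise Calder\'on--Zygmund argument of \cites{MR3161332,MR2990138}, since the relevant family of frequencies genuinely depends on the fiber $x_{2}$ through $N_{x_{2}}$ (or, in the diagonal case, through the map $x_{1}\mapsto a\wtimes x_{1}$ coupled to $x_{2}$). In particular, one has to verify that the orthogonality property of Definition~\ref{tile-proj}~\ref{tile-proj:orth} — which underlies the tree counting — is preserved when the stopping frequencies vary with the fiber, and that the support property \ref{tile-proj:support} does not force a loss when restricting to the complement of $\Omega$. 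Threading these compatibilities through the argument is the technical heart of the section.
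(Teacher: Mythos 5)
The proposal diverges from the paper's argument in a way that is not merely cosmetic; the fiberwise multi-frequency Calder\'on--Zygmund decomposition is applied to the wrong function. In the paper one first normalizes $|E_{1}|\approx 1$ and then observes that the only nontrivial case is $|E_{2}|\ll 1$; the whole point of the decomposition is to exploit the smallness of $E_{2}$. Accordingly, the paper decomposes $\tilde F_{2}$, fiberwise in the $x_{1}$-variable for each fixed $x_{0}$, into a good function $G$ whose $L^{2}$ norm is controlled by a power of $|E_{2}|$ times a power of $2^{k}$ (where $k$ is the $\size^{(0)}$ level of the current tree packet $\mathbf{P}_{k}$). The major subset $E_{1}'$ is obtained quite differently: one removes from $E_{1}$ the diagonal projection of the set where the $M_{p_{0}}$ maximal function of $1_{E_{0}}$ or the directional $\tilde M_{p_{2}}$ maximal function of $1_{E_{2}}$ is large; it has nothing to do with a CZ decomposition of $F_{1}$. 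Your plan instead decomposes $F_{1}$ in the $x_{2}$-variable and derives $E_{1}'$ from its exceptional set, but after normalization $F_{1}$ is not small, so there is no gain to be had from a CZ decomposition of it; and the crucial exponent gain in $|E_{2}|^{\alpha_{2}}$ with $\alpha_{2}>1/2$ would be left unexplained.

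A second structural gap is the order of operations. The paper first runs the tree selection algorithm on $F_{0}$, producing a partition $\mathbf{P}=\cup_{k}\mathbf{P}_{k}$ with $\size^{(0)}(\mathbf{P}_{k},\tilde F_{0})\lesssim 2^{-k}$ and an $L^{p_{0}/2}$ bound on the tree counting function $N_{k}$. The local frequency families $\Omega_{J}$ that define the good function $G$ depend on the trees $\mathbf{T}_{k}$, so $G$ is defined per level $k$ after the $\size^{(0)}$ stopping-time. Moreover, there is no good/bad dichotomy: the paper proves a pointwise identity $\Lambda_{P}(\tilde F_{0},\tilde F_{1},\tilde F_{2})=\Lambda_{P}(\tilde F_{0},\tilde F_{1},G)$ for every $P\in\mathbf{P}_{k}$ by a case analysis, exploiting (i) that $\tilde F_{1}$ vanishes on fibers touching the exceptional set at scales coarser than $J$ and (ii) that $\Omega_{J}$ contains an ancestor of $\omega_{1}$ at finer scales. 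There is no separate estimate of a bad part $\sum_{J}b_{J}$. Finally, you identify as the technical heart the compatibility of $\Pi^{(1)}$ with the fiber-dependent stopping frequencies; but the paper explicitly notes (beginning of Section~\ref{sec:mfcz}) that here one must exploit the \emph{explicit} Walsh form of $\Pi^{(0)}$ and $\Pi^{(2)}$, not the abstract properties of $\Pi^{(1)}$. The genuinely delicate step is verifying the replacement identity for $G$ and estimating $\|G\|_{2}$ via H\"older and Hausdorff--Young against the counting function $N_{k}$.
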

\begin{proof}
The required estimate is invariant under rescaling by powers of $2$, so we may normalize $|E_{1}| \approx 1$.
The localization changes to $E_{i} \subset A_{k}^{2}$ for some $k\in\Z$, but all previous results still apply by scale invariance.
In the case $|E_{2}| \gtrsim |E_{1}|$ the estimate with $E_{1}'=E_{1}$ follows from the local $L^{2}$ case $0<\alpha_{0},\alpha_{1},\alpha_{2}\leq 1/2$, which is given by Proposition~\ref{prop:restricted-type}.
Thus we may assume $|E_{2}| < 2^{-20}$.

Define the exceptional sets
\[
\eset_{0} := \{M_{p_{0}}(|E_{0}|^{-1/p_{0}} 1_{E_{0}}) > 2^{10}\}
\]
and
\[
\eset_{2} := \{\tilde M_{p_{2}}(|E_{2}|^{-1/p_{2}} 1_{E_{2}}) > 2^{10}\},
\]
where $\tilde M_{p_{2}}$ is the directional maximal function (in the direction $x_{1}$).
The set
\[
\eset_{1}:=\pi_{(1)}((\pi_{(0)}^{-1}\eset_{0}\cup \pi_{(2)}^{-1}\eset_{2})\cap\Delta),
\quad
\Delta := \{x_{0}\wplus x_{1}\wplus x_{2} = 0\} \subset \W^{3},
\]
has measure $<1/2$ by the Hardy--Littlewood maximal inequality.
Consider the major subset $E_{1}':=E_{1} \setminus \eset_{1}$.

Define normalized functions
\[
\tilde F_{i} := |E_{i}|^{-1/p_{i}} F_{i}.
\]
By construction of the major subset only the bitiles $P$ with
\[
\pi_{(1)}\vec I_{P}\not\subset \eset_{1}
\]
contribute to the trilinear form $\Lambda$, so consider a finite convex collection $\mathbf{P}$ of such bitiles.
Since the $M_{p_{0}}$ maximal function dominates the $M_{2}$ maximal function pointwise and by Definition~\ref{tile-proj}~\ref{tile-proj:support} we have
\[
\size^{(0)}(\mathbf{P},\tilde F_{0}) \lesssim 1,
\quad
\size^{(1)}(\mathbf{P},\tilde F_{1}) \lesssim 1.
\]
By the tree selection algorithm in Proposition~\ref{prop:tree-selection} we partition $\mathbf{P}$ into a sequence of pairwise disjoint convex unions of pairwise disjoint trees $\mathbf{P}_{k} = \cup_{T\in\mathbf{T}_{k}} T$ and a remainder set with zero contribution to $\Lambda$ in such a way that
\[
\size^{(0)}(\mathbf{P}_{k},\tilde F_{0}) \lesssim 2^{-k}
\]
and
\[
\| N_{k} \|_{p} \lesssim_{p} 2^{2k} \|\tilde F_{0}\|_{2}^{2/p} \|\tilde F_{0}\|_{\infty}^{2-2/p},
\quad N_{k} := \sum_{T\in T_{k}} 1_{\vec I_{T}},
\quad 1\leq p<\infty.
\]
Choosing $p=p_{0}/2$ we obtain the bound
\[
\| N_{k} \|_{p} \lesssim_{p} 2^{2k}.
\]
For a fixed $k$ we will show
\[
|\Lambda^{\epsilon}_{\mathbf{P}_{k}}(\tilde F_{0},\tilde F_{1},\tilde F_{2})| \lesssim 2^{-\delta k}
\]
for some $\delta>0$, depending only on the $p_{i}$'s, to be determined later.

Let $\DI_{\eset}$ denote the collection of the maximal one-dimensional dyadic intervals of the form $\{x_{0}\}\times J_{1} \subset \eset_{2}$.
For each one-dimensional interval $J=\{x_{0}\}\times J_{1}\in\DI_{\eset}$ let
\[
\Omega_{J} := \{ \omega : |\omega||J|=1, \exists T\in\mathbf{T}_{k} : \vec I_{T}\supseteq J, \omega\supseteq \omega_{T}\}.
\]
Let
\[
G:= \sum_{J\in\DI_{\eset}} G_{J},
\quad
G_{J}(x_{0},x_{1}) := 1_{J}(x_{0},x_{1}) \sum_{\omega\in\Omega_{J}} (\Pi_{J_{1}\times\omega} \tilde F_{2}(x_{0},\cdot))(x_{1}).
\]
The sum defining the function $G$ is pointwise finite, and $G$ is measurable since $\tilde F_{2}$ is a dyadic test function.

We claim that for every $P = \vec I \times \omega_{1} \in\mathbf{P}_{k}$ we have
\[
\Lambda_{P}(\tilde F_{0},\tilde F_{1},\tilde F_{2})
=
\Lambda_{P}(\tilde F_{0},\tilde F_{1},G).
\]
Since $E_{2}\subset \eset_{2}$ by construction and the collection $\DI_{\eset}$ covers $\eset_{2}$, it suffices to show
\begin{multline*}
\int_{J_{1}} \tilde F_{1}(x_{0},x_{2}) \h_{I_{0}}(x_{0}) \tilde F_{2}(x_{0},x_{1}) w_{I_{1}^{j}\times \omega_{1}}(x_{1}) \dif x_{1}\\
=
\int_{J_{1}} \tilde F_{1}(x_{0},x_{2}) \h_{I_{0}}(x_{0}) G_{J}(x_{0},x_{1}) w_{I_{1}^{j}\times \omega_{1}}(x_{1}) \dif x_{1}
\end{multline*}
for every $J=\{x_{0}\}\times J_{1}\in\DI_{\eset}$, every $x_{2}\in I_{2}$, and every $j\in\{\pm 1\}$.
If $I_{0}\times I_{1}\cap J=\emptyset$, then both sides vanish identically.
Otherwise we must have $x_{0}\in I_{0}$.
If now $I_{1}\subseteq J_{1}$, then by construction $\tilde F_{1}$ vanishes on $\{x_{0}\} \times I_{2}$, so both sides again vanish identically.
On the other hand, if $J_{1}\subsetneq I_{1}$, then by construction $\Omega_{J}$ contains an ancestor of $\omega_{1}$, so the integrals coincide again.
This finishes the proof of the claim.

Now we estimate $\|G\|_{2}$.
By H\"older and Hausdorff--Young we get
\begin{align*}
\|G_{J}\|_{L^{2}(J)}^{2}
&=
\sum_{\omega\in\Omega_{J}} |\<\tilde F_{2}(x_{0},\cdot), w_{J_{1}\times\omega}\>|^{2}\\
&\leq
|\Omega_{J}|^{1-2/p_{1}'}(\sum_{\omega\in\Omega_{J}} |\<\tilde F_{2}(x_{0},\cdot), w_{J_{1}\times\omega}\>|^{p_{1}'})^{2/p_{1}'}\\
&\leq
|\Omega_{J}|^{1-2/p_{1}'} \| \tilde F_{2} \|_{L^{p_{2}}(J)}^{2} |J_{1}|^{1-2/p_{1}}.
\end{align*}
Maximality of $J\subset \eset_{2}$ gives an upper bound on the above $L^{p_{2}}(J)$ norm, and we obtain
\[
\|G_{J}\|_{L^{2}(J)}^{2}
\lesssim
|\Omega_{J}|^{1-2/p_{2}'} |J_{1}|
\leq
\int_{J} N_{k}^{1-2/p_{2}'}.
\]
Integrating these bounds and using monotonicity of $L^{p}$ norms (recall $|\eset_{2}|\lesssim 1$) we get
\begin{align*}
\|G\|_{2}^{2}
&\lesssim
\int_{\eset_{2}} N_{k}^{1-2/p_{2}'}
\lesssim
(\int_{\eset_{2}} N_{k}^{p} )^{(1-2/p_{2}')/p}\\
&\leq
\| N_{k} \|_{p}^{1-2/p_{2}'}
\lesssim
2^{2k(1-2/p_{2}')}.
\end{align*}
Normalize
\[
\tilde G:=2^{-k(1-2/p_{2}')} G,
\]
so that $\|\tilde G\|_{2}\lesssim 1$.
We claim
\[
|\Lambda^{\epsilon}_{\mathbf{P}_{k}}(\tilde F_{0},\tilde F_{1},\tilde G)| \lesssim 2^{-k}(1+pk),
\]
which would finish the proof.
By the tree selection algorithm in Proposition~\ref{prop:tree-selection} (beginning at some scale $l_{0}\leq 0$ with $\size^{(2)}(\mathbf{P}_{k},\tilde G) \leq 2^{-l_{0}}$) we partition
\[
\mathbf{P}_{k} = \cup_{l=l_{0}}^{\lceil pk \rceil} \cup_{T\in \mathbf{T}_{k,l}} T \cup \mathbf{P}_{k}',
\]
where
\[
\size^{(2)}(T,\tilde G) \lesssim 2^{-l},
\quad
\size^{(1)}(T,\tilde F_{1}) \lesssim \min(1,2^{-l})
\]
for $T\in \mathbf{T}_{k,l}$,
\[
\sum_{T\in \mathbf{T}_{k,l}} |\vec I_{T}| \lesssim 2^{2l},
\]
and
\[
\size^{(2)}(\mathbf{P}_{k}',\tilde G), \size^{(1)}(\mathbf{P}_{k}',\tilde F_{1}) \lesssim 2^{-pk}.
\]
By the single tree estimate \eqref{eq:single-tree-estimate:symmetric} we obtain
\[
|\sum_{l}\sum_{T\in\mathbf{T}_{k,l}} \Lambda^{\epsilon}_{T}(\tilde F_{0},\tilde F_{1}, \tilde G)|
\lesssim
\sum_{l=-\infty}^{\lceil pk \rceil} 2^{2l} 2^{-k} \min(1,2^{-l}) 2^{-l}
\lesssim
2^{-k}(1+pk).
\]
The remaining term can be written
\[
|\Lambda^{\epsilon}_{\mathbf{P}_{k}'}(\tilde F_{0},\tilde F_{1},\tilde G)|
=
|\sum_{T\in\mathbf{T}_{k}}\Lambda^{\epsilon}_{T\cap \mathbf{P}_{k}'}(\tilde F_{0},\tilde F_{1},\tilde G)|.
\]
Each $T\cap \mathbf{P}_{k}'$ is the disjoint union of a set of trees the union of whose top squares has measure bounded by $|\vec I_{T}|$.
We have
\[
\sum_{T\in\mathbf{T}_{k}} |\vec I_{T}|
\leq
\| \sum_{T\in\mathbf{T}_{k}} 1_{\vec I_{T}} \|_{p}^{p}
\lesssim
2^{2pk},
\]
so, again by the single tree estimate \eqref{eq:single-tree-estimate:symmetric},
\[
|\Lambda^{\epsilon}_{\mathbf{P}_{k}'}(\tilde F_{0},\tilde F_{1},\tilde G)|
\lesssim
2^{2pk} 2^{-k} 2^{-pk} 2^{-pk}
=
2^{-k},
\]
finishing the proof of the claim.
\end{proof}

\subsection{Triangles $b_{0}$ and $d_{10}$}
\begin{theorem}
\label{thm:main-restricted-type:F0}
Let $0< \alpha_{2} \leq 1/2 \leq \alpha_{0} < 1$ and $-1/2<\alpha_{1}<1/2$ satisfy \eqref{eq:Lp-range}.
Then for any measurable sets $E_{i} \subset A_{0}^{2}$, $i\in\{0,1,2\}$ there exists a major subset $E_{1}'\subset E_{1}$ (which can be taken equal to $E_{1}$ if $\alpha_{1}>0$)
such that for any dyadic test functions $|F_{i}| \leq 1_{E_{i}}$, $|F_{1}| \leq 1_{E_{1}'}$ with \eqref{eq:F0-diagonal}, $a\in A_{1}\setminus A_{0}$, or \eqref{eq:F0-fiberwise-character} we have
\[
|\Lambda^{\epsilon}(F_0,F_1,F_2)|
\lesssim_{\alpha_{0},\alpha_{1},\alpha_{2}}
\prod_{i=0}^{2} |E_{i}|^{\alpha_{i}},
\]
where the implied constant is independent of the choices of the scalars $|\epsilon_{\vec I}|\leq 1$ with $\epsilon_{\vec I}=0$ whenever $I_{i} \not\subset A_{0}$.
\end{theorem}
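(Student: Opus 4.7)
The plan is to mirror the proof of Theorem~\ref{thm:main-restricted-type}, interchanging the r\^oles of the functions $F_{0}$ and $F_{2}$ (and of the indices $0$ and $2$) throughout. This swap is compatible with the time--frequency setup because \eqref{eq:Pi2} and \eqref{eq:Pi0} are defined by the same formula, and the structural hypothesis on $F_{0}$ exposes it as a 1D Walsh wave packet superposition in a suitable fiber direction: the $x_{1}$-direction (fibering over $x_{2}$) in case \eqref{eq:F0-fiberwise-character}, and the $x_{2}$-direction (fibering over $x_{1}$) in case \eqref{eq:F0-diagonal}. The restriction $a\in A_{1}\setminus A_{0}$ enters precisely to ensure that the 1D frequency intervals $a\wtimes\omega_{1}$ used by $\Pi^{(0)}$ remain dyadic at the scale required by the bitile decomposition.

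After normalizing $|E_{1}|\approx 1$, disposing of $|E_{0}|\gtrsim 1$ by Proposition~\ref{prop:restricted-type}, and assuming $|E_{0}|<2^{-20}$, define
\[
\eset_{2}:=\{M_{p_{2}}(|E_{2}|^{-1/p_{2}}1_{E_{2}})>2^{10}\},\quad
\eset_{0}:=\{\tilde M_{p_{0}}(|E_{0}|^{-1/p_{0}}1_{E_{0}})>2^{10}\},
\]
where $\tilde M_{p_{0}}$ is the one-dimensional maximal operator in the fiber direction identified above. Form $\eset_{1}:=\pi_{(1)}((\pi_{(0)}^{-1}\eset_{0}\cup\pi_{(2)}^{-1}\eset_{2})\cap\Delta)$ and the major subset $E_{1}':=E_{1}\setminus\eset_{1}$; as in the previous theorem $|\eset_{1}|<1/2$. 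With $\tilde F_{i}:=|E_{i}|^{-1/p_{i}}F_{i}$, one has $\size^{(1)}(\mathbf{P},\tilde F_{1}),\size^{(2)}(\mathbf{P},\tilde F_{2})\lesssim 1$ on bitiles $P$ with $\pi_{(1)}\vec I_{P}\not\subset\eset_{1}$; only $\size^{(0)}(\mathbf{P},\tilde F_{0})$ may be large. Run the tree selection algorithm of Proposition~\ref{prop:tree-selection} on the index $i=2$ (instead of $i=0$), producing layers $\mathbf{P}_{k}=\cup_{T\in\mathbf{T}_{k}}T$ with $\size^{(2)}(\mathbf{P}_{k},\tilde F_{2})\lesssim 2^{-k}$ and $\|N_{k}\|_{p_{2}/2}\lesssim 2^{2k}$ for $N_{k}:=\sum_{T\in\mathbf{T}_{k}}1_{\vec I_{T}}$; note that $p_{2}/2\geq 1$ is exactly the hypothesis $\alpha_{2}\leq 1/2$.

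On each layer build a good function $G$ replacing $\tilde F_{0}$: let $\DI_{\eset_{0}}$ denote the collection of maximal 1D dyadic intervals contained in $\eset_{0}$ in the fiber direction identified above, and for each $J\in\DI_{\eset_{0}}$ let $\Omega_{J}$ denote the frequency intervals $\omega$ of selected trees whose top time-squares contain $J$. Set
\[
G:=\sum_{J\in\DI_{\eset_{0}}}1_{J}\sum_{\omega\in\Omega_{J}}\Pi_{J\times\omega}\tilde F_{0},
\]
with $\Pi_{J\times\omega}$ the 1D Walsh projection in the chosen fiber (with frequency rescaled by $a\wtimes\cdot$ in the diagonal case). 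The substitution identity $\Lambda_{P}(\tilde F_{0},\tilde F_{1},\tilde F_{2})=\Lambda_{P}(G,\tilde F_{1},\tilde F_{2})$ on $P\in\mathbf{P}_{k}$ then follows by the same three-case analysis as in the proof of Theorem~\ref{thm:main-restricted-type} (empty intersection; $I\subseteq J$, in which case $\tilde F_{1}$ vanishes by construction of $E_{1}'$; $J\subsetneq I$, in which case $\Omega_{J}$ contains an ancestor of $\omega_{1}$). H\"older and Hausdorff--Young on each 1D fiber, combined with maximality of $J\subset\eset_{0}$, then yield $\|G\|_{2}^{2}\lesssim 2^{2k(1-2/p_{0}')}$; normalizing $\tilde G:=2^{-k(1-2/p_{0}')}G$ and running tree selection once more on indices $0$ and $1$, the single tree estimate \eqref{eq:single-tree-estimate:symmetric} produces $|\Lambda^{\epsilon}_{\mathbf{P}_{k}}(\tilde G,\tilde F_{1},\tilde F_{2})|\lesssim 2^{-k}(1+pk)$, and summation over $k$ closes the argument. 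The main obstacle is verifying the substitution identity in the paragraph above; it is exactly this step that forces the structural hypothesis on $F_{0}$ and, in the diagonal case, the additional restriction $a\in A_{1}\setminus A_{0}$.
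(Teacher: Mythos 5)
Your overall scaffolding (normalize $|E_{1}|\approx 1$, dispose of $|E_{0}|\gtrsim 1$ via the local $L^{2}$ bound, interchange indices $0\leftrightarrow 2$ in the multi-frequency Calder\'on--Zygmund scheme, build $G$ on maximal dyadic intervals in $\eset_{0}$, control $\|G\|_{2}$ by H\"older and Hausdorff--Young) is the right shape. But there are two substantive divergences from what actually makes the argument work.

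First, case \eqref{eq:F0-fiberwise-character} does not need any of this machinery. Since $|F_{0}|$ depends only on $x_{2}$, one may take $E_{0}=A_{0}\times\tilde E_{0}$, and then $E_{1}':=E_{1}\setminus(\tilde E_{0}\times A_{0})$ is a major subset on which the integrand of every $\Lambda_{P}$ vanishes identically: nonvanishing of $F_{0}(x_{1},x_{2})$ forces $x_{2}\in\tilde E_{0}$, while nonvanishing of $F_{1}(x_{2},x_{0})$ with $\supp F_{1}\subset E_{1}'$ forces $x_{2}\notin\tilde E_{0}$. So the form is $0$. Running the full multi-frequency decomposition here is both unnecessary and, as explained next, would run into trouble.

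Second, and more seriously, you cannot simply feed $G$ into the single tree estimate in slot $0$. The derivation of \eqref{eq:single-tree-estimate:symmetric} uses the adaptedness identity \eqref{eq:Lambda-tile-proj}, which by Definition~\ref{tile-proj} is a property of the projections \emph{relative to the particular function $F_{0}$} satisfying \eqref{eq:F0-diagonal} or \eqref{eq:F0-fiberwise-character}; it holds for arbitrary $F_{1},F_{2}$ but not for arbitrary replacements of $F_{0}$. The good function $G$ built from the exceptional set does not in general satisfy either structural hypothesis, so the projections need not be adapted to $G$, and the chain of reasoning that produced $|\Lambda^{\epsilon}_{T}|\lesssim|\vec I_{T}|\prod_{i}\size^{(i)}$ breaks at the first step. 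This is precisely the difficulty the theorem's proof is forced to confront, and your proposal elides it: the substitution identity $\Lambda_{P}(\tilde F_{0},\cdot,\cdot)=\Lambda_{P}(G,\cdot,\cdot)$ alone is not enough. The correct workaround, under $a\in A_{1}\setminus A_{0}$, is to observe that $1_{E_{0}}(x_{1},x_{2})=1_{\tilde E_{0}}(x_{2}\wplus(a\wtimes x_{1}))$ and that for such a diagonal set the directional maximal function $\tilde M_{p_{0}}1_{E_{0}}$ coincides with the two-dimensional $M_{p_{0}}1_{E_{0}}$. This forces the dichotomy that for every $J\in\DI_{\eset}$ and bitile $P$ one has either $J\cap I_{1}\times I_{2}=\emptyset$ or $J_{1}\subsetneq I_{1}$ (the case $I_{1}\subseteq J_{1}$ is excluded), which upgrades the substitution identity to the stronger statement $\Pi^{(0)}_{\mathbf{P}_{k}}\tilde F_{0}=\Pi^{(0)}_{\mathbf{P}_{k}}G$. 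Then one keeps $\tilde F_{0}$ in the form (so adaptedness applies) and only uses $G$ to \emph{compute} $\size^{(0)}(T,\tilde F_{0})=\size^{(0)}(T,G)$. Relatedly, your stated reason for the restriction $a\in A_{1}\setminus A_{0}$ (dyadicity of $a\wtimes\omega_{1}$) is not the operative one; $a\wtimes\omega_{1}$ is dyadic already for any power of two $a$, and the genuine role of $a\in A_{1}\setminus A_{0}$ is the coincidence of the directional and two-dimensional maximal operators on diagonal sets, which in turn delivers the projection identity above.
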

\begin{proof}
We can assume $|E_{0}|\leq 2^{-20} |E_{1}|$, since otherwise the conclusion follows from the local $L^{2}$ case with $E_{1}'=E_{1}$.

In case \eqref{eq:F0-fiberwise-character} we can also without loss of generality assume $E_{0} = A_{0} \times \tilde E_{0}$.
Setting $E_{1}' = E_{1} \setminus \tilde E_{0} \times A_{0}$ we get that the left-hand side of the conclusion vanishes identically.

In case \eqref{eq:F0-diagonal} we argue as in the proof of Theorem~\ref{thm:main-restricted-type} with the roles of indices $0$ and $2$ interchanged.
The main difference from the previous case is that the time-frequency projections in general need not be adapted to the good function $G$.
However, under the additional condition $a\in A_{1}\setminus A_{0}$ we may assume
\[
1_{B_{0}}(x_{1},x_{2}) = 1_{\tilde B_{0}}(x_{2} \wplus (a\wtimes x_{1})),
\]
and then the directional maximal function $\tilde M_{p_{0}}1_{B_{0}}$ coincides with the two-dimensional maximal function $M_{p_{0}}1_{B_{0}}$.
It follows that for every $J\in\DI_{\eset}$ and every bitile $P=\vec I\times\omega_{1}\in\mathbf{P}$ we have either $J\cap I_{1}\times I_{2} = \emptyset$ or $J_{1}\subsetneq I_{1}$, which in turn implies
\[
\Pi^{(0)}_{\mathbf{P}_{k}} \tilde F_{0} = \Pi^{(0)}_{\mathbf{P}_{k}} G.
\]
Thus we may replace $\tilde F_{0}$ by $G$ in the single tree estimates.
\end{proof}

\appendix
\section{Known special cases}
\label{appendix:dyadic}

Let us discuss briefly how our main result specializes to some cases that have already appeared in the literature in a very similar form.

\subsection{Maximally modulated Haar multiplier}
\label{appendix:dyadic:max-mod-haar}
Since the ordinary Haar multipliers
\[
(H^{\epsilon}f)(x) := \sum_{I} \epsilon_I |I|^{-1} \<f,\h_I\> \h_I(x),
\]
where $|\epsilon_I|\leq 1$ for each dyadic interval $I$, constitute a good dyadic model for the Hilbert transform, the maximally modulated Haar multipliers
\begin{equation}
\label{eq:maximal-def}
(H^{\epsilon}_{\star}f)(x) := \sup_{N} |(H^{\epsilon}M_N f)(x)|
\end{equation}
provide a reasonable algebraic model for the Carleson operator, albeit different from the model of truncated Walsh--Fourier series considered e.g.\ in \cite{MR0217510}.
Here $M_N$ simply represents the Walsh modulation operator,
\[
(M_N f)(x) := w_{N}(x) f(x).
\]

Let $\epsilon_{\vec I}=\epsilon_{(I_0,I_1,I_2)}$ depend only on the interval $I_0$ and take two functions $f$ and $g$ on $A_{0}$.
Suppose that $N\colon A_{0}\to\{0,1,2,\ldots\}$ is a choice function that linearizes the supremum in \eqref{eq:maximal-def}.
If we substitute
\begin{align*}
F_0(x_{1},x_{2}) &:= f(x_{1}\wplus x_{2}),\\
F_1(x_{2},x_{0}) &:= \mathop{\mathrm{sgn}}g(x_{0})\sqrt{|g(x_{0})|} \,w_{N(x_{0})}(x_{2}),\\
F_2(x_{0},x_{1}) &:= \sqrt{|g(x_{0})|} \,w_{N(x_{0})}(x_{1}\wplus x_{0})
\end{align*}
into \eqref{eq:THT-def2}, we will obtain for $\Lambda^{\epsilon}(F_0,F_1,F_2)$ the equal expression
\[
\sum_{\vec I\in\DI} \frac{\epsilon_{I_0}}{|I_0|} \iiint f(x_{1}\wplus x_{2}) g(x_{0}) w_{N(x_{0})}(x_{1}\wplus x_{0}) w_{N(x_{0})}(x_{2}) \h_{I_1}(x_{1}) \h_{I_2}(x_{2}) \h_{I_0}(x_{0}) \dif \vec x.
\]
Here and later in this appendix we use the convention $x_{i},y_{i}\in I_{i}$ for integration domains, unless specified otherwise.
By the character property of the Walsh functions and the fact that the Haar functions are simply restrictions of the Rademacher functions to the corresponding intervals this equals
\[
\sum_{\vec I\in\DI} \frac{\epsilon_{I_0}}{|I_0|} \iiint f(x_{1}\wplus x_{2}) g(x_{0}) w_{N(x_{0})}(x_{1}\wplus x_{2}\wplus x_{0}) r_{k}(x_{1}\wplus x_{2}\wplus x_{0}) \dif \vec x.
\]
By changing the variables $y_{0}=x_{1}\wplus x_{2}$ (for fixed $x_{1}$) and observing $y_{0}\in I_1\wplus I_2=I_0$, the above equals
\[
\sum_{\vec I\in\DI} \frac{\epsilon_{I_0}}{|I_0|} \iiint f(y_{0}) g(x_{0}) w_{N(x_{0})}(y_{0}\wplus x_{0}) r_{k}(y_{0}\wplus x_{0}) \dif y_{0} \dif x_{1} \dif x_{0}.
\]
Observe that at each scale $k$ the integral $\sum_{I\in\mathbf{I}_{k}}\int_{x_{1}\in I_{1}}$ can be disregarded as it simply integrates over the union of intervals $I_1$, which is $A_{0}$.
Using the character property once again we obtain
\begin{align*}
&\sum_{I_{0}} \frac{\epsilon_{I_0}}{|I_0|} \iint f(y_{0}) g(x_{0}) w_{N(x_{0})}(y_{0}) w_{N(x_{0})}(x_{0}) \h_{I_0}(y_{0}) \h_{I_0}(x_{0}) \dif y_{0} \dif x_{0}\\
&=
\int_{\W} \sum_{I_0} \frac{\epsilon_{I_0}}{|I_0|} \<w_{N(x_{0})}f,\h_{I_0}\> \h_{I_0}(x_{0}) w_{N(x_{0})}(x_{0}) g(x_{0}) \dif x_{0}\\
&=
\int (M_{N(x_{0})} H^{\epsilon} M_{N(x_{0})}f)(x_{0}) g(x_{0}) \dif x_{0}.
\end{align*}
From the established bound for $\Lambda^{\epsilon}$ in Theorem~\ref{thm:main} using duality we deduce
\[
\|H^{\epsilon}_{\star}f\|_{p}\lesssim\|f\|_p
\quad\text{for any}\quad
1<p<\infty.
\]

\subsection{Walsh model of uniform bilinear Hilbert transform}
Theorem~\ref{thm:main} implies a bound for the trilinear form
\[
\Lambda^{\epsilon,L}_{\mathrm{BHT}}(f,g,h)
:=
\int \sum_{k} \sum_{\substack{I\in\mathbf{I}_{k}\\ \omega:|\omega|=2^{-k}}}\! \epsilon_{I}
\big(\Pi_{I\times (\omega\wplus 2^{-k})}f\big) \big(\Pi_{I\times (2^{L}\omega)}g\big)
\big(\Pi_{I\times (2^{L}\omega \wplus \omega \wplus 2^{-k})}h\big),
\]
where $\epsilon=(\epsilon_I)_I$ is a sequence of coefficients indexed by dyadic intervals and satisfying $|\epsilon_I|\leq 1$, while $L$ is an arbitrary positive integer.
This observation is interesting because a single estimate for the triangular Hilbert transform implies bounds for a sequence of one-dimensional trilinear forms $\Lambda^{\epsilon,L}_{\mathrm{BHT}}$ with constants independent of $\epsilon$ and $L$.

This form is similar to, but different from the trilinear form studied in \cite{MR2997005}.
As in Section~\ref{appendix:dyadic:max-mod-haar}, the discrepancy is due to the fact that our model is based on the algebraic structure of the Walsh field rather than on the order structure.

In order to apply Theorem~\ref{thm:main} substitute
\begin{align*}
F_0(x_{1},x_{2}) &:= f(x_{1} \wplus x_{2} \wplus 2^{-L}x_{2}),\\
F_1(x_{2},x_{0}) &:= h(2^{-L}x_{2} \wplus x_{0}),\\
F_2(x_{0},x_{1}) &:= g(x_{0}\wplus 2^{-L}x_{0} \wplus 2^{-L}x_{1})
\end{align*}
into \eqref{eq:THT-def2} to obtain
\begin{align*}
\Lambda^{\epsilon}(F_0,F_1,F_2)
=\sum_{k} 2^{-k} \sum_{\vec I\in\DI_{k}} \epsilon_{\vec I}
\iiint f(x_{1}\wplus x_{2}\wplus 2^{-L}x_{2}) g(x_{0}\wplus 2^{-L}x_{0}\wplus 2^{-L}x_{1}) & \\[-2mm]
h(2^{-L}x_{2}\wplus x_{0}) r_k(x_{1}\wplus x_{2}\wplus x_{0}) \dif x_{1} \dif x_{2} \dif x_{0} & .
\end{align*}
Observe that $x_{i}\in I_i$, $i=0,1,2$, implies
\begin{align*}
& x_{1} \wplus x_{2} \wplus 2^{-L}x_{2}\in I_1\wplus I_2\wplus 2^{-L}I_2 = I_0\wplus 2^{-L}I_2,\\
& x_{0}\wplus 2^{-L}x_{0} \wplus 2^{-L}x_{1}\in I_0\wplus 2^{-L}(I_0\wplus I_1) = I_0\wplus 2^{-L}I_2,\\
& 2^{-L}x_{2} \wplus x_{0}\in I_0\wplus 2^{-L}I_2,
\end{align*}
so we should expand $f,g,h$ into the Walsh-Fourier series on the dyadic interval $I=I_0\wplus 2^{-L}I_2$ of length $2^k$, i.e.\ into the wave packets with fixed eccentricity:
\begin{align*}
f(x_{1}\wplus x_{2}\wplus 2^{-L}x_{2}) &= 2^{-k} \sum_{m_{0}=0}^{\infty} \<f,1_I w_{m_{0} 2^{-k}}\> w_{m_{0} 2^{-k}}(x_{1}\wplus x_{2}\wplus 2^{-L}x_{2}),\\
g(x_{0}\wplus 2^{-L}x_{0}\wplus 2^{-L}x_{1}) &= 2^{-k} \sum_{m_{2}=0}^{\infty} \<g,1_I w_{m_{2} 2^{-k}}\> w_{m_{2} 2^{-k}}(x_{0}\wplus 2^{-L}x_{0}\wplus 2^{-L}x_{1}),\\
h(2^{-L}x_{2}\wplus x_{0}) &= 2^{-k} \sum_{m_{1}=0}^{\infty} \<h,1_I w_{m_{1} 2^{-k}}\> w_{m_{1} 2^{-k}}(2^{-L}x_{2}\wplus x_{0}).
\end{align*}
Inserting these into the previous expression for $\Lambda^{\epsilon}(F_0,F_1,F_2)$ we obtain
\begin{align*}
\sum_{k} 2^{-4k} \sum_{\vec I\in\DI_{k}} \epsilon_{\vec I} \sum_{m_{0},m_{2},m_{1}}
& \<f,1_I w_{m_{0} 2^{-k}}\> \<g,1_I w_{m_{2} 2^{-k}}\> \<h,1_I w_{m_{1} 2^{-k}}\> \\[-2mm]
& \Big(\int_{I_1} e\big((m_{0}\wplus m_{2} 2^{-L}\wplus 1)2^{-k}\wtimes x_{1}\big) dx_{1}\Big)\\
& \Big(\int_{I_2} e\big((m_{0}\wplus m_{0} 2^{-L}\wplus m_{1} 2^{-L}\wplus 1)2^{-k}\wtimes x_{2}\big) dx_{2}\Big)\\
& \Big(\int_{I_0} e\big((m_{2}\wplus m_{2} 2^{-L}\wplus m_{1}\wplus 1)2^{-k}\wtimes x_{0}\big) dx_{0}\Big).
\end{align*}
Since we are integrating over intervals of length $2^k$, the above summands vanish unless
\[
m_{0}\wplus m_{2} 2^{-L}\wplus 1,\ m_{0}\wplus m_{0} 2^{-L}\wplus m_{1} 2^{-L}\wplus 1,\ \text{and}\ m_{2}\wplus m_{2} 2^{-L}\wplus m_{1}\wplus 1
\]
all belong to $A_{0}$, which is easily seen to be equivalent to the conditions
\[
m_{0}\wplus m_{2}\wplus m_{1}=0\quad \text{and}\quad m_{0}\wplus m_{2} 2^{-L}\wplus 1\in A_{0}.
\]
Moreover, in that case the three functions under the integrals over $I_1,I_2,I_0$ are precisely the constants
\[
2^{k} e\big((m_{0}\wplus m_{2} 2^{-L}\wplus 1)2^{-k}\wtimes l(I_i)\big),\ \ i=1,2,0,
\]
where $l(I_i)$ is the left endpoint of $I_i$.
Because of $0\in I_0\wplus I_1\wplus I_2$ they multiply to $2^{3k}$.
Allow the coefficients $\epsilon_{\vec I}$ to depend on $I=I_0\wplus 2^{-L}I_2$ only and observe that each interval $I\in\mathbf{I}_{k}$ appears for exactly $2^{-k}$ choices of $\vec I$ as they range over $\DI_{k}$.
(Indeed, $I_2$ is arbitrary and $I_0,I_1$ are then uniquely determined.)
We end up with
\[
\sum_{k} 2^{-2k} \sum_{I\in\mathbf{I}_{k}} \epsilon_{I} \sum_{\substack{m_{0},m_{2},m_{1}\\ m_{0}\wplus m_{2}\wplus m_{1}=0\\ m_{0}\wplus m_{2} 2^{-L}\wplus 1\in A_{0}}}
\<f,1_I w_{m_{0} 2^{-k}}\> \<g,1_I w_{m_{2} 2^{-k}}\> \<h,1_I w_{m_{1} 2^{-k}}\>,
\]
i.e., by substituting $m=m_{0}\wplus 1$ and $n=m_{2}\wplus(m_{0}\wplus 1)2^L$,
\begin{align}
\label{eq:BHTcase-expanded}
\sum_{k} 2^{-2k} \sum_{I\in\mathbf{I}_{k}} \epsilon_{I} \sum_{\substack{m,n\\ 0\leq n<2^L}}
\<f,1_I w_{(m\wplus 1)2^{-k}}\> \<g,1_I w_{(m 2^L\wplus n)2^{-k}}\> & \\[-3mm]
\<h,1_I w_{(m 2^L\wplus n\wplus m\wplus 1)2^{-k}}\> & . \nonumber
\end{align}

On the other hand, we can start from $\Lambda^{\epsilon,L}_{\mathrm{BHT}}$ and write the dyadic interval $\omega$ explicitly as $\omega=[m 2^{-k},(m+1)2^{-k})$.
The three time-frequency projections appearing in the definition can be expanded using vertical decompositions into tiles as:
\begin{align*}
\Pi_{I\times (\omega\wplus 2^{-k})}f &= 2^{-k}
\<f,1_I w_{(m\wplus 1)2^{-k}}\> 1_I w_{(m\wplus 1)2^{-k}},\\
\Pi_{I\times (2^{L}\omega)}g &= 2^{-k} \sum_{n=0}^{2^L-1}
\<g,1_I w_{(m 2^L\wplus n)2^{-k}}\> 1_I w_{(m 2^L\wplus n)2^{-k}},\\
\Pi_{I\times (2^{L}\omega \wplus \omega \wplus 2^{-k})}h &= 2^{-k} \sum_{n'=0}^{2^L-1}
\<h,1_I w_{(m 2^L\wplus n'\wplus m\wplus 1)2^{-k}}\> 1_I w_{(m 2^L\wplus n'\wplus m\wplus 1)2^{-k}}.
\end{align*}
Observe that the integral
\[
\int \big(\Pi_{I\times (\omega\wplus 2^{-k})}f\big) \big(\Pi_{I\times (2^{L}\omega)}g\big)
\big(\Pi_{I\times (2^{L}\omega \wplus \omega \wplus 2^{-k})}h\big)
\]
is equal to
\[
2^{-2k} \sum_{n=0}^{2^L-1} \<f,1_I w_{(m\wplus 1)2^{-k}}\> \<g,1_I w_{(m 2^L\wplus n)2^{-k}}\>
\<h,1_I w_{(m 2^L\wplus n\wplus m\wplus 1)2^{-k}}\>,
\]
since the terms with $n\neq n'$ disappear.
That way we arrive at \eqref{eq:BHTcase-expanded} once again, completing the proof of
$\Lambda^{\epsilon}(F_0,F_1,F_2)=\Lambda^{\epsilon,L}_{\mathrm{BHT}}(f,g,h)$.

\subsection{Endpoint counterexample}
The observation from the previous section is also useful to explain the failure of some estimates at the boundary of the Banach triangle.
By formally taking $L\to\infty$ we are motivated to substitute
\[
F_0(x_{1},x_{2}) := f(x_{1} \wplus x_{2}),\ 
F_1(x_{2},x_{0}) := h(x_{0}),\ 
F_2(x_{0},x_{1}) := g(x_{0}),
\]
in which case \eqref{eq:THT-def2} becomes
\begin{align*}
& \sum_{\vec I\in\DI} \epsilon_{I_0} |I_0|^{-1} \Big(\iint f(x_{1}\wplus x_{2}) \h_{I_1}(x_{1})\h_{I_2}(x_{2}) \dif x_{1} \dif x_{2}\Big)
\Big(\int g(x_{0})h(x_{0}) \h_{I_0}(x_{0}) \dif x_{0}\Big)\\
& = \sum_{I_0} \epsilon_{I_0} |I_0|^{-1} \<f,\h_{I_0}\> \<gh,\h_{I_0}\>
= \int f(x) H^{\epsilon}(gh)(x) \dif x.
\end{align*}
Since Haar multipliers are generally not bounded on $L^1$, we see that Estimate \eqref{eq:lp-estimate} cannot hold when $p_0=\infty$.

The positive results in this limiting case do not reveal the true structural complexity of $\Lambda^{\epsilon}$.
Indeed, when one of the functions depends on a single variable alone (such as $F_0(x,y)=x$), then the triangle ``breaks'' immediately.
No techniques from time-frequency analysis are required to bound such degenerate cases, even though they correspond both to the limiting case $a\to\infty$ and to the special case $N\equiv 0$ in Theorem~\ref{thm:main}.

\section{Real triangular Hilbert transform}
\label{appendix:real}
In this appendix we show the equivalence of \eqref{eq:THT-real-def1} and \eqref{eq:THT-real-def2} and indicate how to obtain the Carleson operator from \eqref{eq:THT-real-def2}.

\subsection{Equivalence of the definitions}
For $\vec\beta_{0},\vec\beta_{1},\vec\beta_{2}\in\R^{2}$ in general position consider the change of variables
\[
\begin{pmatrix}
t\\ u\\ v
\end{pmatrix}
=
B
\begin{pmatrix}
x_0\\ x_1\\ x_2
\end{pmatrix},
\quad
B :=
\begin{pmatrix}
1 & 1 & 1\\
\vec\beta_{0} & \vec\beta_{1} & \vec\beta_{2}
\end{pmatrix}.
\]
If $\pi_i$ denotes the projection $\pi_i\colon\R^3\to\R^2$, $\pi_i(x_0,x_1,x_2)=(x_{i+1},x_{i-1})$, then for arbitrary functions $F_0,F_1,F_2$ we have
\begin{align*}
\Lambda_{\Delta}(F_{0},F_{1},F_{2})
&=
\iiint \prod_{i=0}^{2}F_{i}(\pi_i(x_{0},x_{1},x_{2})) \frac{1}{x_{0}+x_{1}+x_{2}} \dif x_{0} \dif x_{1} \dif x_{2}\\
&=
|\det B|^{-1}
\iiint
\prod_{i=0}^{2} F_{i}(\pi_i B^{-1}(t,u,v))
\frac{\dif t}{t} \dif u \dif v\\
&=
|\det B|^{-1}
\iiint
\prod_{i=0}^{2} \tilde F_{i}((u,v)-\vec{\beta_{i}}t)
\frac{\dif t}{t} \dif u \dif v\\
&=
|\det B|^{-1} \Lambda_{\vec\beta_{0},\vec\beta_{1},\vec\beta_{2}}(\tilde F_{0}, \tilde F_{1}, \tilde F_{2}),
\end{align*}
where
\begin{equation}\label{eq:fn-substitution}
\tilde F_{i}(u,v) := F_{i}(\pi_i B^{-1}(0,u,v)).
\end{equation}
Here we have used the fact that
\[
\pi_i B^{-1}
\begin{pmatrix}
1 \\ \vec\beta_{i}
\end{pmatrix}
=
\begin{pmatrix}
0\\ 0
\end{pmatrix}.
\]
The surprising observation is now that
\[
\|\tilde F_{i}\|_{p_{i}} = |\det B|^{1/p_{i}} \|F_{i}\|_{p_{i}}.
\]
Indeed, the change of variables in the definition of $\tilde F_{i}$ is given by the $2\times 2$ submatrix of $B^{-1}$ obtained by crossing out the first column and the $i$-th row.
By Cramer's rule the determinant of that submatrix equals $\det B^{-1}$ times the $(1,i)$-th entry of $B$, up to the sign.
Since the latter entry of $B$ is $1$, the determinant of the change of variables is $\pm(\det B)^{-1}$.
The ratio of $L^{p_{i}}$ norms equals the absolute value of the determinant to the power $-1/p_{i}$, as required.

This shows that an $L^{p_{0}} \times L^{p_{1}} \times L^{p_{2}}$ estimate for $\Lambda_{\Delta}$ cannot be worse than the corresponding estimate for $\Lambda_{\vec\beta_{0},\vec\beta_{1},\vec\beta_{2}}$.
Running the above argument backwards we also obtain the converse.

\subsection{Less singular two-dimensional forms}
The trilinear forms introduced in \cite{MR2597511} can be written as
\[
\Lambda^{K}_{B_{0},B_{1},B_{2}}(F_{0},F_{1},F_{2})
:=
\iint_{\R^{2}} \mathrm{p.v.}\int_{\R^2} \prod_{i=0}^{2} F_i\big(\vec{x}-B_{i}\vec{t}\big) K(\vec{t}) \dif\vec{t} \dif\vec{x},
\]
where $B_1,B_2,B_3$ are now $2\times 2$ real matrices (interpreted as linear operators on $\R^2$) and $K$ is a two-dimensional Calder\'on--Zygmund kernel. If $K$ is odd and homogeneous of degree $-2$, then it takes the form
\[
K(r\cos\theta,r\sin\theta) = \frac{\Omega(\theta)}{r^2},\ \ \Omega(\theta+\pi)=-\Omega(\theta),\ \text{for}\ 0<r<\infty,\ \theta\in\R/(2\pi\Z).
\]
Observe that
\begin{align*}
& \mathrm{p.v.}\int_{\R^2} \prod_{i=0}^{2} F_i\big(\vec{x}-B_{i}\vec{t}\big) K(\vec{t}) \dif\vec{t}\\
& = \mathrm{p.v.}\int_{0}^{2\pi} \int_{0}^{\infty} \prod_{i=0}^{2} F_i\big(\vec{x}-B_{i}(r\cos\theta,r\sin\theta)\big) \frac{\Omega(\theta)}{r^2} r \dif r \dif \theta\\
& = \int_{0}^{\pi} \Omega(\theta) \,\mathrm{p.v.}\int_{\R} \prod_{i=0}^{2} F_i\big(\vec{x}-r B_{i}(\cos\theta,\sin\theta)\big) \frac{\dif r}{r} \dif\theta,
\end{align*}
so that
\[
\Lambda^{K}_{B_{0},B_{1},B_{2}} = \int_{0}^{\pi} \Omega(\theta) \,\Lambda_{B_{0}(\cos\theta,\sin\theta), B_{1}(\cos\theta,\sin\theta), B_{2}(\cos\theta,\sin\theta)} \dif\theta,
\]
i.e.\@ $\Lambda^{K}_{B_{0},B_{1},B_{2}}$ is a ``superposition'' of the forms \eqref{eq:THT-real-def1}.
Consequently, $L^p$ estimates for all cases of the matrices studied in \cite{MR2597511} and the remaining case from \cite{MR2990138} would follow from a single estimate for \eqref{eq:THT-real-def2}, even uniformly over all choices of $B_0,B_1,B_2$.

\subsection{Carleson maximal operator}
Analogously to the dyadic case but with an additional smooth cutoff we consider
\begin{align*}
F(x,y) &:= f(-x-y) \,D_{L}^{p}\phi(x),\\
G(y,z) &:= e_{N(z)}(y) \mathop{\mathrm{sgn}}g(z) \sqrt{|g(z)|} \,D_{L}^{2p'}\phi(y+z),\\
H(z,x) &:= e_{N(z)}(z+x) \sqrt{|g(z)|} \,D_{L}^{2p'} \phi(x)
\end{align*}
for $f\in L^{p}(\R)$ and $g\in L^{p'}(\R)$, $2<p<\infty$, where $e_{N}(x)=e^{2\pi i N x}$, $N$ is a measurable linearizing function for the Carleson operator, $\phi$ is a smooth positive function with compact support, and $D_{L}^{p}\phi(x) = L^{-1/p}\phi(x/L)$.
Then
\[
\Lambda_{\Delta}(F,G,H)
= \iiint f(-x-y)
e_{N(z)}(x+y+z) g(z)
 D_{L}^{p}\phi(x) D_{L}^{2p'}\phi(x) D_{L}^{2p'}\phi(y+z) \frac{\dif(x,y,z)}{x+y+z}.
\]
The change of variables $t=x+y+z$ gives
\[
\iiint f(z-t)
e_{N(z)}(t) g(z)
D_{L}^{p} \phi(x) D_{L}^{2p'} \phi(x) D_{L}^{2p'} \phi(t-x) \frac{\dif(x,z,t)}{t},
\]
which converges to a constant times
\[
\iint \frac{1}{t} f(z-t)
e_{N(z)}(t) g(z)
\dif(z,t)
\]
as $L\to\infty$ and yields an $L^p$ bound for
\[
(C_{N} f)(z) = \mathrm{p.v.}\int_{\mathbb{R}} f(z-t) e^{2\pi iN(z)t} \frac{\dif t}{t}.
\]

\subsection{Multilinear generalization}
For any positive integer $n$ one can also consider the straightforward $(n+1)$-linear generalization of \eqref{eq:THT-real-def1} given by
\begin{equation}
\label{eq:SHT-general-def}
\Lambda_{\vec\beta_{0},\vec\beta_{1},\ldots,\vec\beta_{n}}(F_{0},F_{1},\ldots,F_{n})
:=
\int_{\R^{n}} \mathrm{p.v.}\int_{\R} \prod_{j=0}^{n}F_j(\vec x-\vec\beta_{j}t) \frac{\dif t}{t} \dif \vec x,
\end{equation}
where this time $\vec\beta_{j}\in\R^{n}$ and the functions $F_j$ are $n$-dimensional. This object is expected to be even more difficult, as conjectured bounds for the one-dimensional trilinear Hilbert transform
\begin{equation}
\label{eq:TriHT-def}
\Lambda_{\mathrm{3HT}}(f_0,f_1,f_2,f_3) := \int_{\mathbb{R}} \mathrm{p.v.}\int_{\mathbb{R}} f_0(x) f_1(x-t) f_2(x-2t) f_3(x-3t) \frac{\dif t}{t} \dif x
\end{equation}
and its variants would follow from bounds for \eqref{eq:SHT-general-def} when $n=3$.
No positive results are known for this operator; see \cite{MR2449537} for some negative results.
However, an interesting observation is that the linearized polynomial Carleson operator
\[
(C_{N_1,\ldots,N_{n-1}} f)(x) = \mathrm{p.v.}\int_{\mathbb{R}} f(x-t) e^{i(N_1(x)t + N_2(x)t^2 + \cdots + N_{n-1}(x)t^{n-1})} \frac{\dif t}{t}
\]
can be encoded into \eqref{eq:SHT-general-def}.
Let $\vec\beta_{0}$ be the origin and let $\vec\beta_{1},\ldots,\vec\beta_{n}$ constitute the standard basis for $\R^n$.
The identity
\[
\sum_{j=0}^{k} (-1)^{k-j} \binom{k}{j} j^m = \left\{\begin{array}{cl} 0 & \text{ for } m=0,1,\ldots,k-1, \\ k! & \text{ for } m=k \end{array}\right.
\]
can be shown easily by induction on a positive integer $k$ and its immediate consequence is
\[
\sum_{j=0}^{n-1} \sum_{k=\max\{j,1\}}^{n-1} (-1)^{j}\frac{1}{k!}\binom{k}{j}N_k(x_n)\Big(\sum_{l=1}^{k}l x_l-jt\Big)^k = \sum_{k=1}^{n-1} N_k(x_n) t^k .
\]
It follows that with
\[
F_j(x_1,\ldots,x_n) = g_j(x_n) \prod_{k=\max\{j,1\}}^{n-1} \exp\bigg(i(-1)^{j}\frac{1}{k!}\binom{k}{j}N_k(x_n)\Big(\sum_{l=1}^{k}l x_l\Big)^k\bigg)
\]
for $j=0,1,\ldots,n-1$ and
\[
F_n(x_1,\ldots,x_n) = f(x_n),
\]
the form \eqref{eq:SHT-general-def} formally becomes $\int (C_{N_1,\ldots,N_{n-1}} f) g_0 g_1\cdots g_{n-1}$.
To be precise we should also include appropriate cutoffs of the functions $F_j$, similarly as we did in the previous subsection.
It would be interesting to investigate which particular cases of $\Lambda_{\vec\beta_{0},\vec\beta_{1},\ldots,\vec\beta_{n}}$ can be resolved using the techniques from \cite{MR2545246} and \cite{arXiv:1105.4504}.

\printbibliography
\end{document}